\newtheorem{tm}{Theorem}[section]
\newtheorem{prop}{Proposition}[section]
\newtheorem{remark}{Remark}[section]
\numberwithin{equation}{section}
\title{Limit of p-Laplacian Obstacle problems}
	\author{Raffaela Capitanelli, Maria Agostina Vivaldi}
	\address{Department of Basic and Applied Sciences for Engineering\newline Sapienza University of Rome\newline via A. Scarpa 10, Rome, Italy\newline raffaela.capitanelli@uniroma1.it, maria.vivaldi@sbai.uniroma1.it}
\begin{document}

\date{\today}

\begin{abstract}
In this paper we study  asymptotic behavior of solutions to obstacle problems  for $p-$Laplacians as $p\to \infty.$
For the one-dimensional case and for the radial case, we give an explicit expression of the limit. In the n-dimensional case, we 
provide sufficient conditions   to assure the uniform convergence of  whole family of the solutions   of obstacle problems  either for  data $f$ that change sign in $\Omega$  or  for data $f$ (that do not change sign in $\Omega$)  possibly  vanishing  in a set of positive measure.

\end{abstract}

\maketitle

\bigskip
Keywords: p-Laplace equations,  $\infty$-Laplace equations, asymptotic behaviour,  obstacle problems.

\bigskip AMS 35J60,  35J65,  35B30, 35J87

\section{Introduction}

The study of obstacle problems for both $p$-Laplacian and $\infty$-Laplacian, which  has  recently received a strong impulse, is closely connected with many relevant  topics  such as 
 the mass optimization problems, the Absolutely  Minimizing Lipschitz Extensions, the Infinity Harmonic Functions, the Monge-Kantorovich mass transfer problem and  the Tug of War Games. We mention for instance \cite{ALS}, \cite{ACJ},   \cite{BDM},  \cite{BDR},  \cite{BBD}, \cite{EG},  \cite{FKR},  \cite{J}, \cite{JPR}, \cite{LW}, \cite{MRT}, \cite{RTU},   \cite{VI}, and the references therein.

In this paper we study the asymptotic behavior of solutions to obstacle problems  for $p-$Laplacians as $p$ tends to $\infty.$
Let  $\Omega\subset \mathbb{R}^n$ denote a bounded domain. We consider the problem:
\begin{equation} \label{OPp}
\text{find } u\in \mathcal{K},\, \,\,\,\int_{\Omega} |\nabla u|^{p-2}\nabla u\nabla(v-u)  \,dx\, -\int_{\Omega} f(v-u)\,dx\,\geqslant 0\,\,\,
  \forall v\in \mathcal{K}, 
  \end{equation}
where $$\mathcal{K}=\{v\in W_0^{1,p}( \Omega):\ v\geqslant\varphi\,\text{in}\,\, \Omega\,\}$$
with obstacle $\varphi\in  W^{1,p}( \Omega),$    $\varphi\leq 0$ on  $\partial \Omega,$ and the datum
  \begin{equation} \label{f}f\in L^\infty(\Omega).\end{equation}
Then, for any fixed $p,$ there exists a unique solution $u_p.$
If we assume \begin{equation} \label{triangle}-\triangle_p \varphi \in L^{p'}(\Omega),\end{equation}  where $-\triangle_pu =-div(|\nabla u|^{p-2 }\nabla u),$ then  the following  Lewy-Stampacchia inequality  holds (see \cite{TR})
\begin{equation}\label{LS}   f\leq -\triangle_p u_p\leq   -\triangle_p  \varphi \vee f.\end{equation}

Moreover, see for instance \cite{MRT} and Theorem 3.1 in \cite{CF},  if  \begin{equation}\label{knonv} \mathcal{K}^{\infty}=\{u\in W_0^{1,\infty}(\Omega): u\geqslant\varphi\ \text{ in }\Omega,||\nabla u||_{L^{\infty}(\Omega)}\leq 1\}\neq \emptyset\end{equation}
 then the family of the solution $u_p$ is pre-compact in $C(\bar\Omega);$  in particular,  from any sequence $u_{p_k}$  we can extract a subsequence  $u_{{p_k}_j}$ converging to a function $u_{\infty}$  in  $C(\bar\Omega)$, $u_{\infty}$ being a maximizer of the following  problem:
\begin{equation}\label{eq:3DV}
\mathcal{F}(u)=\max\biggl\{ \mathcal{F}(w): w\in \mathcal{K}^{\infty}\biggl\}\end{equation}
where  $$\mathcal{F}(w)=\int_{\Omega} w(x)f(x) dx.$$
Moreover,  \begin{equation}\label{limsup}  \limsup_{p\to\infty} ||\nabla u_p||_{L^{\infty}(\Omega)}\leq 1. \end{equation}

The limit  Problem \eqref{eq:3DV} is related to an optimal mass transport problem with taxes. More precisely, in  \cite{MRT}  it is proved  that solutions to  obstacle problems  for $p-$Laplacians  give an approximation to the extra production/demand necessary  in the process and to a Kantorovich potential for the corresponding transport problem.  Moreover, in  \cite{MRT}, the authors also show that this problem can be interpreted as an optimal mass transport problem with courier.

In this paper we face the question whether the whole family of the solutions  $u_p$ of the obstacle Problem (\ref {OPp}) is convergent to the same limit function $u_\infty.$   
For the analogous results for Dirichlet problems we mention  \cite {BDM}, \cite {BBD},  \cite {EG}, \cite {FMc}, \cite {HL},  and the references therein. 
The  asymptotic behavior of minimizers of $p$-energy forms on fractals as the Sierpinski Gasket (as  $p\to \infty$) has been recently addressed in  \cite{CCV}.

In the present paper, we give an explicit expression of the limit for the one-dimensional case and for the radial case (see Theorems \ref{main} and \ref{radial}). For arbitrary 
  $n-$dimensional domains, we provide sufficient conditions to assure the uniform convergence of  whole family of the solutions  to obstacle problems  either for  data $f$ that change sign in $\Omega$  or  for data $f$ (that do not change sign in $\Omega$)  possibly  vanishing  in a set of positive measure (see Theorems \ref{tf}, \ref{tf3}, \ref{tf2}, \ref{tf4} and \ref{tf5}). 
 Our paper has been  deeply inspired by Ishii and Loreti,  \cite {HL}, nevertheless
 the obstacle problems present their own  peculiarities and structural difficulties. In  Remarks  \ref{nolim}, \ref{E2}, \ref{E1}, \ref{E4} and \ref{E5} we highlight some peculiarities.  The main difficulties are due to the  fact that the solution $u_p$ to Problem \ref{OPp} satisfies the equation only on the set where  it  is detached from the obstacle. As this set depends on  $p$ then we have to deal  with Dirichlet problems with non homogeneous boundary conditions  in intervals moving with $p$ (see Theorem \ref{TD}, Proposition \ref{PDp} and Remark \ref{spezzo}). Hence the behavior of coincidence sets $\Gamma _p$ (\ref{gamma}) plays a crucial role (see condition  \eqref{Lambda}). As the regularity properties of the free boundaries are important tools for the study of the behavior of coincidence sets, then our approach is strictly related to the papers \cite{RTU} and  \cite{BDR}. In particular Theorem 2.8 in \cite{RTU} as well as Theorems  7.5  and 1.3 in \cite{BDR} provide sufficient conditions to assure  that condition (\ref{Lambda}) holds. We note that in  \cite{RTU} and  \cite{BDR}   smoothness assumptions are required while in our paper we  deal with a larger class of obstacles and data.
In Section  \ref{example} we give examples of obstacle problems where condition (\ref{Lambda}) is satisfied even if  neither the assumptions of  Theorem 2.8 in \cite{RTU} nor  those of  Theorem 7.5 in \cite{BDR} are  satisfied. 
We note that hypothesis (\ref{Lambda}) is not assumed in Theorems  \ref{tf},  \ref{tf3},   \ref{tf2} and  \ref{tf4}.    In Theorem  \ref{tf}    concerning data $f$  changing sign in $\Omega,$ 
condition \eqref{f3} puts in relation  the position of the support  of $f$  with respect  to the  boundary of  $\Omega$ and it provides an alternative assumption that, in some sense, forces the coincidence sets  to have a  \emph{good} behavior.  Similarly the sign conditions  on the datum $f$  in Theorems    \ref{tf3} and   \ref{tf2} provide  alternative assumptions. Furthermore we remark that, as the constraint  in the convex $\mathcal{K}$ is from below, then  as a consequence of the Lewy-Stampacchia inequality \eqref{LS}, the  easy situation is when $f $  (possibly vanishing  in a set of positive measure) is non negative  while, when $f $ is non positive, we have to require also  conditions on $-\triangle_p\varphi$  (see \eqref{f115}  and  \eqref{f116} respectively). Finally, in Section   \ref{example} we give some  examples of  \emph{non trivial} obstacle
problems where all the assumptions of Theorem \ref{tf5} are satisfied (see  Examples 2 and 5).

As  mentioned above, our topic is   intrinsically related to the Absolutely Minimizing Lipschitz Extensions (AMLEs), to  viscosity solutions to the obstacle problem for the $\infty$-Laplacian and  to comparison principles for $\infty-$superharmonic functions (see \cite{J} and  \cite{RTU}).  To prove Theorems  \ref{tf},  \ref{tf3},   \ref{tf2}, \ref{tf4}  and  \ref{tf5} we  use such  approaches and tools.
 More precisely, under suitable assumptions,  every  sequence of solutions $u_p$ to   obstacle problems  \eqref{OPp},  being viscosity solutions (with respect to the $p$-Laplacian), converges to a viscosity solution $u_\infty$ of the obstacle problem for the $\infty$-Laplacian, which  is the smallest continuous  $\infty-$superharmonic function  above the obstacle.  Hence  the limit $u_\infty$  is unique. In fact, 
  among the solutions to  Problem (\ref{eq:3DV})  the limit  $u_\infty$ is the (unique)  Absolutely Minimizing Lipschitz Extension (AMLE) according to the terminology of \cite{ACJ} (see Example 6 in Section \ref{example}).
 In \cite{RTU} the authors consider obstacle problems for  both the $\infty$-Laplacian and  the $p$-Laplacians (see also \cite{BDR} for similar results). 
 Theorems \ref{tf},  \ref{tf3},   \ref{tf2}, \ref{tf4}, and  \ref{tf5} refer to   a more general class of problems and require  weaker   assumptions than  the ones in \cite{RTU} (see  Section 4). 
Moreover  Theorems \ref{main} and   \ref{radial}  provide, for the limit  of solutions $u_p,$ a simple representation in terms of the data. We note that the proofs of  Theorems \ref{main} and  \ref{radial}  do not involve the deep, delicate theory of  viscosity solutions for $\infty$-Laplacian  and AMLE solutions.

The plan of the paper is the following. Section 2 concerns  one-dimensional Dirichlet problems  with non homogeneous  boundary data, Section 3 concerns  the one-dimensional obstacle problem.  In Section 4 we consider the n-dimensional case. In the last section we provide   examples, comments and remarks.

\section{One-dimensional Dirichlet problem   with non homogeneous  boundary data }

We consider Dirichlet problems  with non homogeneous  boundary data in  the {one}-dimensional case.
More precisely, we consider  the following
 problem on $\Omega=(a,b),$ \begin{equation} \label{DP} 
\text{find } u\in \mathcal{K}_{D},\, \,\,\,\int_{\Omega} |\nabla u|^{p-2}\nabla u\nabla v   \,dx\, =\int_{\Omega} f  v \,dx\\,\,\,
  \forall v\in W_0^{1,p}( \Omega), 
  \end{equation}
where $$\mathcal{K}_{D}=\{u\in W^{1,p}( \Omega): u(a)=A_p, u(b)=B_p\}.$$
For any fixed $p,$  and $f\in L^\infty(\Omega) $ there exists a unique solution $u_p.$ 
By proceeding as in \cite{HL}, we can prove  that, if \begin{equation}\label{ipo}\frac{|B_p-A_p|}{b-a}\leq 1,\qquad  A_p\to A, \qquad B_p\to B, \end{equation} then $u_{p}\longrightarrow u_{\infty}$  weakly in $W^{1,m}(\Omega),\forall m>2$, $u_{\infty}$ being a maximizer of the following variational problem
\begin{equation}\label{eq:3DNO}
\int_{\Omega}u_{\infty}(x)f(x) dx=   \max  \biggl\{\mathcal{F}(w)  : w\in \mathcal{K}_D^{\infty}\biggl\}  
\end{equation} where   $$\mathcal{F}(w)=\int_{\Omega}w(x)f(x) dx$$ $$\mathcal{K}_D^{\infty}=\{u\in W^{1,\infty}(\Omega): u(a)=A, u(b)=B ,||\nabla u||_{L^{\infty}(\Omega)}\leq 1\}.$$

From now on we denote by $\mu(E)$ the $n$-dimensional Lebesgue measure  of the set $ E\subset \mathbb {R}^n.$

More precisely, the following theorem holds.
\begin{tm}\label{TD}
Suppose that  \eqref{f} and  \eqref{ipo} hold. Then  $u_{p}$ converges uniformly to the following function $U\in \mathcal{K}_D^{\infty}:$
\begin{equation}\label{solU} U(x)=\int_a^x (\chi_{O_-}-\chi_{O_+}+k \chi_{O_0}) dt +A\end{equation}
where 
$$O_-=\{x\in (a,b), F <\beta^* \}, \quad\quad O_+=\{x\in (a,b), F >\beta^* \},\quad\quad O_0=\{x\in (a,b), F =\beta^* \}$$
$$F(x)=\int_a^x  f(t)dt, \quad \quad h(r)=\mu(\{x\in \Omega: F(x)<r\})$$
\begin {equation} \label {star}
\beta^*=
\sup\{r\in \mathbb{R}: h(r)\leq\frac{b-a-A+B}2\}\,
\end {equation}
and 
 \begin {equation} \label {cappa}
k= \begin{cases}
   \frac{ \mu(O_+)  -\mu(O_-)  -A+B } {\mu(O_0)}   \qquad if \qquad  \mu(O_0)>0, \\
 0 \qquad if \qquad  \mu(O_0)=0.\\
\end{cases}
\end {equation}
 \end{tm}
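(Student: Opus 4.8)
The plan is to characterize the limit $u_\infty$ from the variational problem \eqref{eq:3DNO} directly, exploiting the one-dimensional structure, and then verify that the explicit candidate $U$ in \eqref{solU} is the unique maximizer. Since the weak $W^{1,m}$ convergence together with uniform gradient bounds already gives $u_p \to u_\infty$ with $u_\infty \in \mathcal{K}_D^\infty$ a maximizer of $\mathcal{F}$ over $\mathcal{K}_D^\infty$, it suffices to show that \emph{the} maximizer equals $U$. The key reformulation is integration by parts: for $w \in \mathcal{K}_D^\infty$ with $g := w'$ satisfying $\|g\|_{L^\infty}\leq 1$ and $\int_a^b g\,dt = B-A$, one writes $\mathcal{F}(w)=\int_\Omega w f = -\int_\Omega F\, w' + [Fw]_a^b = -\int_\Omega F\, g\, dt + F(b)\,B$ (using $w(b)=B$, $F(a)=0$, and $A=w(a)$ absorbed into the constant). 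Thus maximizing $\mathcal{F}$ is equivalent to \emph{minimizing} $\int_a^b F(x)\,g(x)\,dx$ over the admissible slopes $g$.

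This is now a transparent linear optimization problem: minimize $\int_a^b F g\,dx$ subject to the pointwise constraint $|g|\leq 1$ a.e. and the single scalar constraint $\int_a^b g\,dx = B-A$. I would solve it by a bathtub/rearrangement argument. Without the integral constraint the minimizer would be $g=-\operatorname{sign}(F)$, i.e. $g=+1$ where $F<0$ and $g=-1$ where $F>0$; the scalar constraint forces us to introduce a threshold level $\beta^*$ for $F$ and set $g=+1$ on $O_-=\{F<\beta^*\}$, $g=-1$ on $O_+=\{F>\beta^*\}$, and an intermediate constant value $k$ on the level set $O_0=\{F=\beta^*\}$. The level $\beta^*$ is pinned down precisely by the measure-balance condition: the requirement $\int_a^b g = B-A$ translates, using $\mu(O_-)+\mu(O_+)+\mu(O_0)=b-a$, into $\mu(O_-)-\mu(O_+)+k\,\mu(O_0)=B-A$, which forces the formula \eqref{cappa} for $k$ and, through the monotone function $h(r)=\mu(\{F<r\})$, the definition \eqref{star} of $\beta^*$ as the critical value $h(r)=\tfrac{b-a-A+B}{2}$. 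One then checks that this choice indeed satisfies $|k|\leq 1$, so the candidate $U$ in \eqref{solU} lies in $\mathcal{K}_D^\infty$.

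The optimality of this candidate I would establish by a standard exchange (rearrangement) argument: if $\tilde g$ is any other admissible slope, compare $\int F \tilde g$ with $\int F g$ by observing that moving mass of $\tilde g$ from where $F$ is small to where $F$ is large can only increase the integral, so $g$ as constructed is a minimizer; the structure $g=-\operatorname{sign}(F-\beta^*)$ on $\{F\neq\beta^*\}$ shows any deviation costs a nonnegative amount. Uniqueness of the maximizer (hence convergence of the whole family, not just subsequences) follows because on $O_-\cup O_+$ the optimal $g$ is forced to be $\pm 1$ — wherever $F\neq\beta^*$ the minimizer of the linear functional is uniquely determined — and on $O_0$ the total integral constraint fixes the admissible average, which combined with the constant-slope selection gives $U$; any maximizer must agree with $U$ up to the constant fixed by $u(a)=A$.

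The main obstacle I anticipate is the careful treatment of the level set $O_0=\{F=\beta^*\}$ when it has positive measure, since there the linear functional is indifferent to the value of $g$ and uniqueness is not automatic from the minimization alone — it must be recovered from the interaction between the integral constraint and the requirement that the limit slope be admissible, which is where the explicit constant $k$ in \eqref{cappa} enters. A secondary technical point is verifying $|k|\leq 1$ and showing that $\beta^*$ defined by the supremum in \eqref{star} yields exactly the balance needed; this requires monotonicity and one-sided continuity properties of $h$, and handling the boundary behavior of the $\sup$ when $h$ has jumps. Care with these measure-theoretic subtleties, rather than any deep analysis, will be the technical heart of the proof.
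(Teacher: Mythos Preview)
Your approach via the variational problem \eqref{eq:3DNO} has a genuine gap at the uniqueness step. You correctly observe that on the level set $O_0=\{F=\beta^*\}$ the linear functional $\int F g$ is indifferent to the pointwise values of $g$: only the average $\frac{1}{\mu(O_0)}\int_{O_0} g$ is fixed by the scalar constraint $\int_a^b g = B-A$. But this means the maximizer of $\mathcal{F}$ over $\mathcal{K}_D^\infty$ is \emph{not} unique when $\mu(O_0)>0$; any $g$ with $g=+1$ on $O_-$, $g=-1$ on $O_+$, $|g|\le 1$ on $O_0$, and $\int_{O_0} g = k\,\mu(O_0)$ gives a maximizer. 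The paper itself records this non-uniqueness explicitly (see Remark~\ref{E11} following Example~1, where two distinct maximizers of \eqref{eq:3DNO} are exhibited). Your claim that ``any maximizer must agree with $U$'' is therefore false, and the variational characterization alone cannot identify the limit $U$ or prove convergence of the whole family $u_p$.

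The paper's route (carried out in the proof of Theorem~\ref{main}, to which the proof of Theorem~\ref{TD} is referred) is essentially different: it works from the explicit ODE representation
\[
u_p(x)=A_p+\int_a^x \psi_p(\beta_p - F(t))\,dt,\qquad \psi_p(s)=|s|^{\frac{1}{p-1}-1}s,
\]
with $\beta_p$ determined by the boundary condition at $b$. One first localizes $\beta_p$ to a bounded interval, then proves $\beta_p\to\beta^*$ by a contradiction argument using the monotonicity of $h$. The crucial step you are missing is the selection on $O_0$: from the identity $B_p-A_p=\int_a^b \psi_p(\beta_p-F)\,dt$ one extracts $\lim_{p\to\infty}\psi_p(\beta_p-\beta^*)=k$ (formula \eqref{formulak} in the paper), which forces the limit slope to be the \emph{constant} $k$ on $O_0$. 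This information comes from the structure of $u_p$ for finite $p$ and cannot be recovered from the limiting variational problem. Your bathtub argument does recover the correct $\beta^*$ and the values of $U'$ on $O_-\cup O_+$, but to complete the proof you must go back to the sequence $u_p$ itself to determine the behavior on $O_0$.
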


We skip the proof, as it is similar to the proof of  following Theorem \ref{main}.
   
    \begin{remark} If \begin{equation}\label{ipo5}\frac{|B_p-A_p|}{b-a}\geq1\end{equation} the solution \eqref{solU} does not depend on the  datum  $f.$ 
    \end{remark}

More precisely, we have the following proposition.
   
   \begin{prop}
If \begin{equation}\label{ipo3}\frac{|B_p-A_p|}{b-a}\geq 1  ,\qquad  A_p\to A, \qquad B_p\to B \end{equation}then  \begin{equation}\label{U0} U(x)=A+ (\frac{B-A}{b-a} ) (x-a).\end{equation}

\end{prop}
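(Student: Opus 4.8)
The plan is to show that under the strict inequality condition \eqref{ipo3}, the formula \eqref{solU} of Theorem \ref{TD} collapses to the linear interpolation \eqref{U0}. The key observation is that \eqref{ipo3} forces the admissible class to contain essentially only the straight line: since any $w\in\mathcal{K}_D^\infty$ satisfies $w(a)=A$, $w(b)=B$, and $\|\nabla w\|_{L^\infty(\Omega)}\le 1$, the mean value theorem gives
\begin{equation}\label{eq:collapse}
\left|\frac{B-A}{b-a}\right|=\left|\frac{w(b)-w(a)}{b-a}\right|\le \|\nabla w\|_{L^\infty(\Omega)}\le 1,
\end{equation}
so \eqref{ipo3} is in fact an equality in the limit, $\left|\frac{B-A}{b-a}\right|=1$. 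The chain of inequalities \eqref{eq:collapse} must then be saturated, forcing $|\nabla w|=1$ a.e. with constant sign throughout $\Omega$; hence $w$ is exactly the affine function \eqref{U0}. Thus $\mathcal{K}_D^\infty$ is a singleton and the maximizer of \eqref{eq:3DNO} is forced to be \eqref{U0} regardless of $f$.

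First I would treat the two sign cases. Without loss of generality assume $B\ge A$, so that \eqref{ipo3} reads $\frac{B-A}{b-a}\ge 1$, while \eqref{eq:collapse} gives the reverse; therefore $B-A=b-a$. For any competitor $w\in\mathcal{K}_D^\infty$ I would write
\begin{equation}
b-a=B-A=w(b)-w(a)=\int_a^b w'(t)\,dt\le\int_a^b|w'(t)|\,dt\le b-a,
\end{equation}
and the equality at both ends forces $w'(t)=1$ for a.e. $t\in(a,b)$, hence $w(x)=A+(x-a)$, which is precisely \eqref{U0} in this case. The case $B<A$ is symmetric and yields $w'\equiv-1$. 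Since the admissible set reduces to a single point, the optimization problem \eqref{eq:3DNO} is trivial and its unique solution is the affine function.

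Alternatively, and perhaps more in keeping with the structure of Theorem \ref{TD}, I would verify directly that the representation \eqref{solU} reduces to \eqref{U0} under \eqref{ipo3}. Here the point is that the threshold $\beta^*$ defined in \eqref{star} degenerates: when $B-A=b-a$ the constraint $h(r)\le\frac{b-a-A+B}{2}=\frac{(b-a)+(b-a)}{2}=b-a=\mu(\Omega)$ is satisfied for every $r$, so $\beta^*=+\infty$. Consequently $O_-=\Omega$, $O_+=O_0=\emptyset$, and \eqref{solU} gives $U(x)=\int_a^x 1\,dt+A=A+(x-a)$, matching \eqref{U0} since $B-A=b-a$. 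The symmetric case $A-B=b-a$ gives $\beta^*=-\infty$, $O_+=\Omega$, and $U(x)=A-(x-a)=A+\frac{B-A}{b-a}(x-a)$.

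The main obstacle, though it is a mild one, is confirming that the limit $u_\infty$ genuinely coincides with the linear function and not merely with some maximizer of \eqref{eq:3DNO}: one must invoke the fact—recalled in the excerpt around \eqref{eq:3DNO}—that the whole family $u_p$ converges to a maximizer of the limit functional, together with the uniqueness forced by the singleton structure of $\mathcal{K}_D^\infty$. Once the admissible class is shown to contain only \eqref{U0}, uniqueness is automatic and no delicate analysis of the coincidence set or of viscosity solutions is required; the degeneracy of $\beta^*$ does all the work.
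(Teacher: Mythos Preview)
Your argument is clean and correct for the borderline case $\frac{|B-A|}{b-a}=1$, but it does not cover the whole proposition. Hypothesis \eqref{ipo3} allows $\frac{|B_p-A_p|}{b-a}$ to be strictly larger than $1$, and then the limit may well satisfy $\frac{|B-A|}{b-a}>1$. In that regime your chain \eqref{eq:collapse} is vacuous because $\mathcal{K}_D^\infty$ is \emph{empty}: no $w\in W^{1,\infty}$ can satisfy $w(a)=A$, $w(b)=B$ and $\|w'\|_{L^\infty}\le 1$ simultaneously. Consequently the variational problem \eqref{eq:3DNO} has no admissible competitors, and the fact you invoke---that $u_p$ converges to a maximizer of \eqref{eq:3DNO}---was established in the paper only under \eqref{ipo}, i.e.\ under $\frac{|B_p-A_p|}{b-a}\le 1$, so it is not available here. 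The conclusion \eqref{U0} still predicts a line whose slope has absolute value exceeding $1$, and an independent argument is needed to show that $u_p$ converges to it.

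The paper closes this gap by a rescaling. Setting $D_p=\frac{|B_p-A_p|}{b-a}>1$ and $u_p=D_p v_p$, the function $v_p$ solves the same equation with right-hand side $f/D_p^{\,p-1}$ and boundary data $A_p/D_p$, $B_p/D_p$, which now satisfy the \emph{equality} case $\frac{|B_p/D_p-A_p/D_p|}{b-a}=1$. The borderline analysis (essentially your argument, or the direct computation via $\beta^*$) then gives $v_p\to V$ linear, and hence $u_p=D_p v_p\to D\,V$, which is precisely \eqref{U0}. A small slip in your alternative computation: when $A-B=b-a$ one has $\beta^*=\sup\{r:h(r)\le 0\}=F_-$, not $-\infty$; the paper gets $O_-=\emptyset$ and $k=-1$ in that case, which still yields \eqref{U0}.
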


  \begin{proof} 
  
First we consider $ \frac{|B_p-A_p|}{b-a}=1$ and  $A>B.$ Then \begin {equation} \beta^*=
\sup\{ r\in \mathbb{R},  h(r)\leq 0\}
\end {equation}
  and then $\beta^*=F_-=\min_{x\in [a,b]}  F(x).$
  So $O_-=\emptyset$ and if $\mu(O_o)>0$, then $k=-1$ (see \eqref{cappa}) and  \eqref{U0}  is proved.

If    $ \frac{|B_p-A_p|}{b-a}=1$ holds and $A<B,$ then  $\beta^*= \sup\{ r\in \mathbb{R}: h(r)\leq b-a\}=+\infty$ and then  $O_+=O_0=\emptyset$ and $O_-=(a,b)$  and    \eqref{U0}  is  showed.

  If   $ \frac{|B_p-A_p|}{b-a}=D_p>1 $    holds,  we consider $u_p = D_p  v_p$ where $v_p$ solve  \begin {equation} \label {ep}-\frac{d}{dx}(|u'(x)|^{p-2} u'(x))= \frac{f(x)}{D^{p-1}_p}\end {equation}  with $v_p(a)=A_p/D_p,$ $v_p(b)=B_p/D_p$  and
  \begin{equation}\frac{|B_p-A_p|}{D_p(b-a)}=1.\end{equation}
  Then $v_p$ converges to   $$V(x)=\frac 1D (A+ (\frac{B-A}{b-a} ) (x-a))$$ where  $D=\frac{|B-A|}{b-a}$  and then \eqref{U0}  is proved. 
    \end{proof}   
   
    We note that the result of Theorem \ref{TD}  holds also for a family of  Dirichlet problems in moving intervals. 
More precisely,  consider the problems on $\Omega_p=(a_p,b_p),$ $f\in L^\infty(\Omega)$
\begin{equation} \label{Dp}\begin {cases}
\text{find } u_p\in W^{1,p}( \Omega_p)\,\, \text{such that}\,\,  u_p(a_p)=A_p, \,\,u_p(b_p)=B_p, \text{and}\\
\int_{\Omega_p} |\nabla u|^{p-2}\nabla u\nabla v   \,dx\, =\int_{\Omega_p} f  v \,dx  \,\,\,\forall v\in W_0^{1,p}( \Omega_p). \end{cases}
  \end{equation}

Then the following proposition holds (we skip the proof, as it is similar to the proof of Theorem \ref{main}).
  \begin{prop} \label{PDp} Suppose
    \begin{equation}\label{ipop}\frac{|B_p-A_p|}{b_p-a_p}\leq 1,\,\, \text{and} \,\,A_p\to A ,\,\, B_p\to B ,\quad a_p\to a ,\,\, b_p\to b, \quad a_p\geq a,\,\, \quad b_p\leq b. \end{equation} 
  Then the solution $u_{p}$ converges  (locally) uniformly  in $(a,b)$ to the  function $U$ defined in (\ref{solU}).
\end{prop}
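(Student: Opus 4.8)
The plan is to follow the blueprint of Theorems \ref{TD} and \ref{main}, exploiting the fact that in one dimension the $p$-Laplace equation integrates once explicitly, and then to track how the moving endpoints affect the passage to the limit. First I would write the Euler equation for \eqref{Dp} in integrated form: since $-\frac{d}{dx}\bigl(|u_p'|^{p-2}u_p'\bigr)=f$ on $(a_p,b_p)$, the flux $q_p(x):=|u_p'(x)|^{p-2}u_p'(x)$ satisfies $q_p(x)=\beta_p-F_p(x)$, where $F_p(x)=\int_{a_p}^x f\,dt$ and $\beta_p=q_p(a_p)$ is a constant to be fixed. Inverting the monotone map $t\mapsto|t|^{p-2}t$ gives $u_p'(x)=\psi_p(\beta_p-F_p(x))$ with $\psi_p(s)=\operatorname{sgn}(s)\,|s|^{1/(p-1)}$, and $\beta_p$ is determined by the boundary constraint $\int_{a_p}^{b_p}\psi_p(\beta_p-F_p)\,dx=B_p-A_p$.

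The second step is the uniform gradient bound. Using \eqref{ipop}, namely $|B_p-A_p|/(b_p-a_p)\le 1$, together with the monotone dependence of $\beta_p\mapsto\int_{a_p}^{b_p}\psi_p(\beta_p-F_p)\,dx$, I would show that $\beta_p$, and hence $\beta_p-F_p$, stays in a bounded range independent of $p$; since $|u_p'(x)|=|\beta_p-F_p(x)|^{1/(p-1)}$, this yields $\limsup_p\|u_p'\|_{L^{\infty}(\Omega_p)}\le 1$. Extending each $u_p$ affinely to $(a,b)$ (or simply restricting to compact subsets) then provides a family that is uniformly Lipschitz on compacta, so by Ascoli--Arzelà I can extract a subsequence converging locally uniformly in $(a,b)$ and weakly in $W^{1,m}$ for every $m>2$.

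The third step identifies the limit. Because $a_p\to a$ with $a_p\ge a$ and $f\in L^\infty$, one has $F_p=F-c_p$ on $\Omega_p$ with $c_p=\int_a^{a_p}f\,dt\to 0$, so $F_p\to F$ uniformly and the admissible levels converge. I would show that $\beta_p\to\beta^*$ with $\beta^*$ given by \eqref{star}, by passing to the limit in the boundary constraint: on $O_-=\{F<\beta^*\}$ the factor $\psi_p(\beta_p-F_p)\to 1$, on $O_+=\{F>\beta^*\}$ it tends to $-1$, and on $O_0=\{F=\beta^*\}$ the limit is the constant $k$ of \eqref{cappa} forced by the requirement $\int_a^b u_\infty'\,dx=B-A$. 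Integrating $u_\infty'=\chi_{O_-}-\chi_{O_+}+k\chi_{O_0}$ from $a$ and using $A_p\to A$ recovers exactly the function $U$ of \eqref{solU}.

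The main obstacle I anticipate is the coupling between the moving endpoints and the determination of $\beta_p$: one must verify that the shift $c_p$ and the perturbations $a_p-a$, $b_p-b$ do not alter the limiting level $\beta^*$ nor the value of $k$, and in particular that no mass of the constraint integral escapes near the moving boundary points. This requires the monotone dependence of the constraint on $\beta_p$ to be quantitatively stable under the domain perturbation, most delicately when $\mu(O_0)>0$, where the precise distribution of $\psi_p(\beta_p-F_p)$ on the near-level set $\{|F-\beta^*|<\varepsilon\}$ must be controlled. Finally, since the limit $U$ is independent of the extracted subsequence, a standard argument upgrades the subsequential convergence to convergence of the whole family, and the restriction to local uniform convergence in $(a,b)$ is exactly what the gap between $\Omega_p$ and $(a,b)$ near the endpoints permits.
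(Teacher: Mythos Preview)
Your proposal is correct and follows exactly the route the paper indicates: the authors explicitly skip the proof of Proposition~\ref{PDp}, referring to the proof of Theorem~\ref{main}, and your sketch reproduces precisely that argument (the integrated representation $u_p'=\psi_p(\beta_p-F_p)$, the a~priori bound on $\beta_p$ from the constraint $|B_p-A_p|/(b_p-a_p)\le 1$, the identification $\beta_p\to\beta^*$ via the level-set decomposition $O_-,O_+,O_0$, and the formula for $k$). The observation that $F_p=F-c_p$ with $c_p\to 0$ and your handling of the moving endpoints $a_p,b_p$ are exactly the adaptations needed, and they mirror what the paper does in Step~1 of the proof of Theorem~\ref{main} with $x_p\to d$, $y_p\to e$.
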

\begin{remark}  \label{spezzo}    From the previous proposition  we deduce  that  for any choice of family  of points  $x_p\in [a,b),\, x_p\to \eta \in [a,b)$  and  of points $y_p\in (a,b], \,\,y_p>x_p,\,\,\, y_p\to \gamma \in (a,b]$ with $\eta<\gamma$ the solutions  $v_p $ of  Problems (\ref{Dp})  in the intervals $(x_p, y_p)$ converges
 (locally) uniformly  in $(\eta,\gamma)$ to the restriction to the interval  $(\eta,\gamma)$ of the function $U$ defined in (\ref{solU}).   \end{remark}

  \section{One-dimensional Obstacle problem}

We  consider the   obstacle problem \eqref{OPp} on $\Omega=(a,b).$ 

We define the closed set 
\begin{equation}\label{gamma}
\Gamma_p=\{x\in\bar{\Omega}: u_p=\varphi \}; \end{equation} 
we set  $$\Gamma_\infty =\liminf \Gamma_p\quad \text{and }\qquad\Gamma^*_\infty =\limsup \Gamma_p,$$ 
and we recall that $$ \limsup \Gamma_p=\cap^\infty_{p=1}\cup_{n\geq p} \Gamma_n \,  \text{and } \liminf \Gamma_p=\cup^\infty_{p=1}\cap_{n\geq p} \Gamma_n $$ 
and we simply write  $\lim \Gamma_p$ if $\Gamma_\infty=\Gamma^*_\infty$  (for the definition of $\limsup \Gamma_p$ and of $\liminf \Gamma_p$ we refer to \cite {K}).

\begin{tm} \label{main}  We assume hypotheses  (\ref{f}), (\ref{triangle}), (\ref{knonv})  and 
\begin{equation}\label{Lambda}
\overline{int \Gamma_\infty^*}\subset \Gamma_\infty .\end{equation}
Then the solution $u_{p}$ converges uniformly to the following function $U\in \mathcal{K}^{\infty}:$
 $$ U=\varphi \,\, \text{in}\,\, \Gamma_\infty $$ and  for any (connected) component $(d, e),$  $[d, e]\subset(a,b)$ of $ \Omega \setminus \Gamma_\infty$  
 \begin {equation} \label {U}  U(x)=\int_d^x (\chi_{O_-}-\chi_{O_+}+k \chi_{O_0}) dt +\varphi(d)\end {equation} 
where 
$$O_-=\{x\in (d,e), F <\beta^* \}, \quad\quad O_+=\{x\in (d,e), F >\beta^* \},\quad\quad O_0=\{x\in (d,e), F =\beta^* \}$$

\begin {equation} \label {Fd}
F(x)=\int_d^x  f(t)dt, \quad \quad h(r)=\mu(\{x\in (d,e) : F(x)<r\})\end {equation}

\begin {equation} \label {star1}
\beta^*= \sup\{ r\in \mathbb{R}: h(r)\leq\frac{e-d-\varphi(d)+\varphi(e)}2\}\, 
\end {equation}
\begin {equation} \label {cappa1}
k= \begin{cases}
   \frac{ \mu(O_+)  -\mu(O_-)  -\varphi(d)+\varphi(e) } {\mu(O_0)}   &\qquad if \qquad  \mu(O_0)>0,\\
 0 &\qquad if \qquad  \mu(O_0)=0.\\
\end{cases}
\end {equation}

For any (connected) component $(a,c)$ of $ \Omega \setminus \Gamma_\infty$    
 \begin {equation} \label {U2}  U(x)=\int_a^x (\chi_{O_-}-\chi_{O_+}+k \chi_{O_0}) dt \end {equation} 
where 
$$O_-=\{x\in (a,c), F <\beta^* \}, \quad\quad O_+=\{x\in (a,c), F >\beta^* \},\quad\quad O_0=\{x\in (a,c), F =\beta^* \}$$
$$F(x)=\int_a^x  f(t)dt, \quad \quad h(r)=\mu(\{x\in (a,c) : F(x)<r\})$$
\begin {equation} \label {star2}
\beta^*= \sup\{ r\in \mathbb{R}: h(r)\leq\frac{c-a+\varphi(c)}2\}\,
\end {equation}
\begin {equation} \label {cappa2}
k= \begin{cases}
  \frac{ \mu(O_+)  -\mu(O_-)  +\varphi(c) } {\mu(O_0)}   \qquad &if \qquad  \mu(O_0)>0,  \\  
 0 \qquad &if \qquad  \mu(O_0)=0.\\
\end{cases}
\end {equation}

For any (connected) component $(d, b)$   of $ \Omega \setminus \Gamma_\infty$  
 \begin {equation} \label {U3}  U(x)=\int_b^x (\chi_{O_-}-\chi_{O_+}+k \chi_{O_0}) dt \end {equation} 
where 
$$O_-=\{x\in (d,b), F <\beta^* \}, \quad\quad O_+=\{x\in (d,b), F >\beta^* \},\quad\quad O_0=\{x\in (d,b), F =\beta^* \}$$
$$F(x)=\int_b^x  f(t)dt, \quad \quad h(r)=\mu(\{x\in (d,b) : F(x)<r\})$$
\begin {equation} \label {star3}
\beta^*= \sup\{ r\in \mathbb{R}: h(r)\leq\frac{b-d-\varphi(d)}2\}\, 
\end {equation}
\begin {equation} \label {cappa3}
k= \begin{cases}
   \frac{ \mu(O_+)  -\mu(O_-)  -\varphi(d) } {\mu(O_0)}   \qquad &if \qquad  \mu(O_0)>0,\\
 0 \qquad &if \qquad  \mu(O_0)=0.\\
\end{cases}
\end {equation}
 \end{tm}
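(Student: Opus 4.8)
The plan is to reduce Theorem \ref{main} to the Dirichlet results already established (Theorem \ref{TD} and especially Proposition \ref{PDp} together with Remark \ref{spezzo}), using the hypothesis \eqref{Lambda} to control the coincidence sets $\Gamma_p$. First I would recall that by \eqref{knonv} and the pre-compactness cited in the introduction, the family $u_p$ admits subsequences converging uniformly on $\bar\Omega$ to a maximizer $u_\infty$ of \eqref{eq:3DV}; the goal is to show that \emph{every} such limit equals the explicit function $U$, which forces the whole family to converge. On the coincidence set I would argue that $U=\varphi$ on $\Gamma_\infty$: if $x\in\Gamma_\infty=\liminf\Gamma_p$, then $x\in\Gamma_p$ for all large $p$, so $u_p(x)=\varphi(x)$, and passing to the limit gives $u_\infty(x)=\varphi(x)$.

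The heart of the argument is the analysis on a component $(d,e)$ of $\Omega\setminus\Gamma_\infty$. The key observation is that on the open set where $u_p>\varphi$, the variational inequality \eqref{OPp} becomes the Euler--Lagrange equation $-\triangle_p u_p=f$, i.e.\ $u_p$ solves the one-dimensional Dirichlet problem of Section 2. Here I would use \eqref{Lambda} in the form $\overline{\mathrm{int}\,\Gamma_\infty^*}\subset\Gamma_\infty$ to guarantee that, for any closed subinterval $[d',e']\subset(d,e)$, the solutions detach from the obstacle for all large $p$: points interior to the non-coincidence component cannot belong to $\limsup\Gamma_p$ once we know $\overline{\mathrm{int}\,\Gamma^*_\infty}\subset\Gamma_\infty$ and they lie outside $\Gamma_\infty$. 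Consequently there exist endpoints $d_p,e_p$ of the $p$-dependent detachment interval with $d_p\to d$, $e_p\to e$, $d_p\ge d$, $e_p\le e$, and on $(d_p,e_p)$ the function $u_p$ solves the nonhomogeneous Dirichlet problem \eqref{Dp} with boundary values $u_p(d_p)=\varphi(d_p)$, $u_p(e_p)=\varphi(e_p)$. Because $u_p\in\mathcal{K}^{\infty}$-type bounds give $\|\nabla u_p\|_{L^\infty}\le 1+o(1)$ via \eqref{limsup}, the ratio $|u_p(e_p)-u_p(d_p)|/(e_p-d_p)\le 1$ in the limit, so hypothesis \eqref{ipop} of Proposition \ref{PDp} is verified (using $\varphi(d_p)\to\varphi(d)$, $\varphi(e_p)\to\varphi(e)$).

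Applying Proposition \ref{PDp} and Remark \ref{spezzo} to the moving intervals $(d_p,e_p)$ then yields that $u_p$ converges locally uniformly on $(d,e)$ to the restriction of the Dirichlet limit $U$ built from $F$, $h$, $\beta^*$ and $k$ as in \eqref{solU}; identifying the data $A=\varphi(d)$, $B=\varphi(e)$ in formulas \eqref{star}--\eqref{cappa} reproduces exactly \eqref{U}--\eqref{cappa1}. The two boundary components $(a,c)$ and $(d,b)$ are handled identically, the only change being that one endpoint sits on $\partial\Omega$ where $u_p=0$: setting $A=0$ (respectively $B=0$) and using $\varphi\le 0$ on $\partial\Omega$ together with the admissibility $u_\infty\ge\varphi$ recovers \eqref{U2}--\eqref{cappa2} and \eqref{U3}--\eqref{cappa3}. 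Since the limit $U$ is independent of the chosen subsequence, the full family $u_p$ converges uniformly to $U$.

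The main obstacle I expect is the transfer of boundary data from the fixed free-boundary of the limit to the moving detachment points $d_p,e_p$: one must show both that such detachment points exist for all large $p$ (this is precisely where \eqref{Lambda} prevents the coincidence set from oscillating into the interior of the component) and that they converge monotonically enough to satisfy the inclusion conditions $d_p\ge d$, $b_p\le b$ of \eqref{ipop}. Verifying that the boundary-value ratio stays $\le 1$ in the limit is delicate because $u_p$ need not lie in $\mathcal{K}^\infty$ for finite $p$; here one leans on \eqref{limsup} and the uniform convergence to pass from the $p$-dependent gradient bounds to the admissibility of the limit.
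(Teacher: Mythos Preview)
Your overall strategy---reduce the obstacle problem on each component of $\Omega\setminus\Gamma_\infty$ to a Dirichlet problem on moving intervals and invoke Proposition~\ref{PDp}/Remark~\ref{spezzo}---is exactly the paper's route. The gap is in how you extract those moving intervals from hypothesis~\eqref{Lambda}.

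You claim that \eqref{Lambda} forces, for any $[d',e']\subset(d,e)$, the solution $u_p$ to detach from $\varphi$ on $[d',e']$ for large $p$; equivalently, that no interior point of $(d,e)$ lies in $\Gamma_\infty^*=\limsup\Gamma_p$. This is false: \eqref{Lambda} only says $\overline{\mathrm{int}\,\Gamma_\infty^*}\subset\Gamma_\infty$, so it rules out \emph{intervals} of $\Gamma_\infty^*$ sitting inside $(d,e)$, but not isolated points or nowhere-dense pieces. For each finite $p$ the set $\Gamma_p\cap(d,e)$ may well be nonempty, and there is no reason to expect a single detachment interval $(d_p,e_p)$ on which $-\triangle_p u_p=f$ holds. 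The paper confronts this directly (its Step~3): starting from contact points $x_p\to d$, $y_p^*\to e$ coming from $d,e\in\Gamma_\infty$, it lets $z_p$ be the \emph{first} contact in $(x_p,y_p^*)$ and shows, using \eqref{Lambda}, that any persistent contact interval $[z_p,z_p+\delta_p]$ with $\liminf z_p\in(d,e)$ must satisfy $\delta_p\to 0$; these ``vanishing contacts'' are then shown, via Remark~\ref{spezzo}, not to affect the limit $U$. Your argument needs this mechanism.

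Two smaller points also need attention. First, the monotonicity $d_p\ge d$, $e_p\le e$ demanded by \eqref{ipop} is not available: the contact points $x_p\to d$, $y_p\to e$ produced by $d,e\in\Gamma_\infty$ may approach from either side. Second, the ratio bound $|u_p(e_p)-u_p(d_p)|/(e_p-d_p)\le 1$ is needed for each $p$ in Proposition~\ref{PDp}, not merely in the limit; \eqref{limsup} only gives it asymptotically. The paper treats this as a separate case (its Step~2), observing that when the ratio tends to $1$ the limit is the affine interpolant, which coincides with the formula~\eqref{U} in that degenerate situation.
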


From now on we denote by $ Lip_1(\bar\Omega)$  the space of the  Lipschitz functions with Lipschitz constant less or equal to $1.$

\begin{remark}
We note that if $\varphi\leq 0 $ on  $\partial \Omega,$ then the assumption   $\varphi\in Lip_1(\bar\Omega)$   implies that the convex $\mathcal{K}^\infty$ is not empty  but  this condition is not necessary. In fact,  on $\Omega=(-2,2),$ the obstacle  $\varphi= 1-x^2$ does not belong to the space $Lip_1(\bar\Omega)$ while assumption  (\ref{knonv}) is satisfied  as the following  function   $w$ belongs to $ \mathcal{K}^{\infty}$   $$w=\begin{cases}  0   \qquad & -2<x\leq -\frac54\\
x+\frac54 \qquad  & -\frac54<x\leq -\frac12\\
1-x^2\qquad &-\frac12<x\leq \frac12 \\
 -x+\frac54 \qquad  &\frac12<x\leq  \frac54\\
0  \qquad &\frac54<x\leq 2
\end{cases} $$
(see Section \ref{example}). 
 
 \end{remark}

 Before proving Theorem \ref{main},  we establish the following preliminary results which  take into account the three different cases for the connected components of $\Omega \setminus \Gamma_\infty.$

\begin{prop}\label{prop2} 
Let  $ x_p\in(a,b)$  and    $ y_p\in (x_p,b)$   such that $ -\triangle u_p=f$ in $(x_p, y_p),$ $u_p(x_p)=\varphi(x_p), $    $u_p(y_p)=\varphi(y_p).$
 If  \begin{equation}\label{up2}\frac{|u_p(y_p)-u_p(x_p)|}{y_p-x_p}\leq 1,\end{equation}
 then  there exists a unique value of $\beta$, say $\beta_p,$ such that 
  $$  u_p(y_p)= u_p(x_p)  +\int_{x_p}^{y_p} \psi_p(\beta_p -F_p^{**}(t))dt $$ 
  where $\psi_p(s)= |s|^{\frac1{p-1}-1}s$ for $s\in \mathbb{R}$ and  $F_p^{**}(x)=\int_{x_p}^x f(t) dt.$
 
 Moreover, \begin{equation}\label{Fb}\beta_p\in [F_{-,p}  -1, F_{+,p}  + 1] \quad\,\,\text{where}\quad\,\ F_{+,p}=\max_{[x_p,y_p] } F_p^{**}, \qquad  F_{-,p}=\min_{[x_p,y_p]}  F_p^{**}. \end{equation} 

\end{prop}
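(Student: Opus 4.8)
The plan is to reduce the one-dimensional $p$-Laplacian equation on $(x_p,y_p)$ to a first-order relation, invert the nonlinearity to obtain an explicit formula for $u_p'$, and then fix the integration constant by a monotonicity and intermediate-value argument. The bound \eqref{Fb} will then drop out by comparing that constant against two explicit test values, using \eqref{up2}.

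First I would integrate the equation $-\frac{d}{dx}\bigl(|u_p'|^{p-2}u_p'\bigr)=f$ once over $(x_p,y_p)$. Since $f\in L^\infty$, the one-dimensional theory gives $u_p\in C^1$ with $|u_p'|^{p-2}u_p'$ absolutely continuous, so there is a real constant $\beta$ with $|u_p'(x)|^{p-2}u_p'(x)=\beta-F_p^{**}(x)$ a.e., where $F_p^{**}(x)=\int_{x_p}^x f(t)\,dt$. Because $s\mapsto|s|^{p-2}s$ is a strictly increasing bijection of $\mathbb{R}$ whose inverse is exactly $\psi_p(s)=|s|^{\frac1{p-1}-1}s$, this yields $u_p'(x)=\psi_p\bigl(\beta-F_p^{**}(x)\bigr)$. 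Integrating once more over $(x_p,y_p)$ produces $u_p(y_p)-u_p(x_p)=G(\beta)$, where $G(\beta):=\int_{x_p}^{y_p}\psi_p\bigl(\beta-F_p^{**}(t)\bigr)\,dt$.

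Next I would establish existence and uniqueness of $\beta_p$. As $\psi_p$ is continuous and strictly increasing, for each fixed $t$ the map $\beta\mapsto\psi_p\bigl(\beta-F_p^{**}(t)\bigr)$ is continuous and strictly increasing; hence $G$ is continuous (the integrand is bounded uniformly in $t$ over any compact range of $\beta$) and strictly increasing on $\mathbb{R}$, with $G(\beta)\to\pm\infty$ as $\beta\to\pm\infty$. Therefore $G$ is a homeomorphism of $\mathbb{R}$ onto $\mathbb{R}$, and there is a unique $\beta_p$ solving $G(\beta_p)=u_p(y_p)-u_p(x_p)$. I expect this global monotonicity-and-surjectivity step to be the technical heart of the argument; everything else is a direct comparison.

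Finally I would prove the bound by testing $G$ at the two explicit values $F_{+,p}+1$ and $F_{-,p}-1$. For $\beta=F_{+,p}+1$ we have $\beta-F_p^{**}(t)\geq1$ on $[x_p,y_p]$, so $\psi_p\bigl(\beta-F_p^{**}(t)\bigr)\geq\psi_p(1)=1$ and thus $G(F_{+,p}+1)\geq y_p-x_p$; combined with \eqref{up2}, which gives $u_p(y_p)-u_p(x_p)\leq y_p-x_p$, we obtain $G(\beta_p)\leq G(F_{+,p}+1)$, whence $\beta_p\leq F_{+,p}+1$ by strict monotonicity. Symmetrically, at $\beta=F_{-,p}-1$ we have $\beta-F_p^{**}(t)\leq-1$, so $G(F_{-,p}-1)\leq-(y_p-x_p)\leq u_p(y_p)-u_p(x_p)=G(\beta_p)$, giving $\beta_p\geq F_{-,p}-1$. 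Together these inequalities yield $\beta_p\in[F_{-,p}-1,\,F_{+,p}+1]$, which is \eqref{Fb}.
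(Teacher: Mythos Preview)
Your proof is correct and follows essentially the same approach as the paper's: integrate the equation once to obtain $u_p'(x)=\psi_p(\beta-F_p^{**}(x))$ with $\beta=|u_p'(x_p)|^{p-2}u_p'(x_p)$, use the strict monotonicity of $\beta\mapsto\int_{x_p}^{y_p}\psi_p(\beta-F_p^{**}(t))\,dt$ for uniqueness, and test at $\beta=F_{\pm,p}\pm1$ for the bounds. Your endpoint comparison is marginally cleaner than the paper's (which splits on the sign of $u_p(y_p)-u_p(x_p)$ before invoking \eqref{up2}), but the substance is identical.
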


\begin{proof}
  
  We recall that  the solution $u_p$ belongs to  $C^1([a,b])$ (see (\ref{f}), (\ref{triangle}) and (\ref{LS})).
  
  According to  \cite{HL}, we obtain that for any $x\in (x_p, y_p)$ \begin{equation}\label{eqHL2}  u_p(x)= u_p(x_p)   +\int_{x_p} ^x \psi_p(\beta_p -F_p^{**}(t))dt \end{equation}
  with $\psi_p(s)= |s|^{\frac1{p-1}-1}s$ for $s\in \mathbb{R},$  $F_p^{**}(x)=\int_{x_p}^x f(t) dt$ and \begin{equation}\label{b}\beta_p=  |u'_p(x_p)|^{p-2} u'_p(x_p) .   \end{equation}
  
  By the property of $\psi_p,$ there exists a unique value of $\beta$, say $\beta_p,$ such that 
  $$  u_p(y_p)= u_p(x_p)  +\int_{x_p}^{y_p} \psi_p(\beta_p -F_p^{**}(t))dt .$$

  We observe that \begin{equation}\label{bb2}\beta_p\in [F_{-,p}  -1, F_{+,p}  + 1]\end{equation}
  where   $ F_{+,p}$ and  $F_{-,p}$ are defined in  (\ref{Fb}).

  We verify that  $$  \int_{x_p}^{y_p} \psi_p( F_{+,p}   +1 -F_p^{**}(t))dt  \geq     u_p(y_p)- u_p(x_p).$$
  If $  u_p(y_p)- u_p(x_p) \leq 0,$ the previous inequality holds trivially. Suppose $ u_p(y_p)- u_p(x_p) >0,$ then
  $$(F_{+,p}   +1 -F_p^{**}(t))\geq (\frac{u_p(y_p)- u_p(x_p)} {y_p-x_p})^{p-1}$$
where we	 use   \eqref{up2}.

Now we verify that  $$\int_{x_p}^{y_p} \psi_p( F_{-,p }  -1 -F_p^{**}(t))dt  \leq     u_p(y_p)- u_p(x_p).$$     If $u_p(y_p)-u_p(x_p) \geq 0,$ the previous inequality holds trivially. Suppose $u_p(y_p)-u_p(x_p) <0,$
then $$(-F_{-,p }   +1 +F_p^{**}(t) )\geq (\frac{u_p(x_p)- u_p(y_p)} {y_p-x_p})^{p-1}$$
where we	 use   \eqref{up2}.
\end{proof}

By proceeding as in the proof of Proposition \ref{prop2} we can show the following result that concerns the second case.

\begin{prop}\label{prop1} 

Let  $ x_p\in (a,b),$ such that $-\triangle u_p=f$ in $(a, x_p),$ $u_p(x_p)=\varphi(x_p),\,u_p(a)=0.$

If  \begin{equation}\label{up1}\frac{|u_p(x_p)-u_p(a)|}{x_p-a}\leq 1\end{equation}
 then  there exists a unique value of $\beta$, say $\beta_p,$ such that 
   $$  u_p(x_p)= \int_a^{x_p} \psi_p(\beta_p -F(t))dt $$
 where $\psi_p(s)= |s|^{\frac1{p-1}-1}s$ for $s\in \mathbb{R}$ and $F(x)=\int_a^x f(t) dt.$
 
 Moreover, $\beta_p\in [\hat{F}_--1, \hat{F}_+ +1]$ where 
 \begin{equation}\label{F+} \hat{F}_+=\min_{[a,x_p] } F \qquad  \hat{F}_-=\min_{[a,x_p]}  F. \end{equation} 
\end{prop}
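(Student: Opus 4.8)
The plan is to mirror the structure of the proof of Proposition~\ref{prop2}, which we may assume, now specialized to the boundary component $(a,x_p)$ where one endpoint carries the homogeneous Dirichlet condition $u_p(a)=0$ rather than an obstacle contact value. First I would record that, since $u_p\in C^1([a,b])$ and satisfies $-\triangle u_p=f$ classically on $(a,x_p)$, an integration of the equation $-\frac{d}{dx}\bigl(|u_p'|^{p-2}u_p'\bigr)=f$ from $a$ to $x$ yields $|u_p'(x)|^{p-2}u_p'(x)=\beta_p-F(x)$, where $F(x)=\int_a^x f\,dt$ and $\beta_p:=|u_p'(a)|^{p-2}u_p'(a)$. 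Inverting the monotone map $s\mapsto|s|^{p-2}s$ via its inverse $\psi_p(s)=|s|^{\frac1{p-1}-1}s$ gives $u_p'(x)=\psi_p(\beta_p-F(x))$, and integrating from $a$, using $u_p(a)=0$, produces the representation
\begin{equation*}
u_p(x_p)=\int_a^{x_p}\psi_p(\beta_p-F(t))\,dt.
\end{equation*}
This is the same computation as in Proposition~\ref{prop2} with $x_p$ replaced by $a$ and the base value $u_p(x_p)$ replaced by $0$; the absence of an obstacle at $a$ changes nothing in the derivation.

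Next I would establish existence and uniqueness of $\beta_p$. The right-hand side $G(\beta):=\int_a^{x_p}\psi_p(\beta-F(t))\,dt$ is a continuous, strictly increasing function of $\beta$ (since $\psi_p$ is strictly increasing and the integrand is strictly increasing in $\beta$ pointwise), with range all of $\mathbb{R}$; hence for the target value $u_p(x_p)-u_p(a)=u_p(x_p)$ there is exactly one $\beta_p$ solving $G(\beta_p)=u_p(x_p)$. The value defined in \eqref{b}-style fashion, $\beta_p=|u_p'(a)|^{p-2}u_p'(a)$, is then precisely this unique root.

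For the localization $\beta_p\in[\hat F_--1,\hat F_++1]$ I would again follow Proposition~\ref{prop2} verbatim, checking the two one-sided inequalities that bracket $\beta_p$. Using hypothesis \eqref{up1}, namely $\frac{|u_p(x_p)-u_p(a)|}{x_p-a}\le 1$, one shows $G(\hat F_++1)\ge u_p(x_p)-u_p(a)$ and $G(\hat F_--1)\le u_p(x_p)-u_p(a)$: when the increment is nonpositive the upper inequality is trivial, and when positive one bounds $\hat F_++1-F(t)\ge\bigl(\frac{u_p(x_p)-u_p(a)}{x_p-a}\bigr)^{p-1}$ using \eqref{up1}, so that $\psi_p$ applied to this exceeds the constant slope; the lower bound is symmetric. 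Monotonicity of $G$ then traps the root in the stated interval.

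The only genuine point requiring care—and the step I expect to be the main obstacle—is the differentiability/regularity needed to write $|u_p'|^{p-2}u_p'=\beta_p-F$ as a pointwise identity and to justify that the flux is absolutely continuous with the claimed primitive. This is exactly the $C^1([a,b])$ regularity of $u_p$, which follows from \eqref{f}, \eqref{triangle} and the Lewy--Stampacchia inequality \eqref{LS} as noted at the start of the proof of Proposition~\ref{prop2}; I would simply invoke it. A minor secondary care is that the stated definition \eqref{F+} literally reads $\hat F_+=\min_{[a,x_p]}F$, evidently a typo for $\max_{[a,x_p]}F$, and I would use $\hat F_+=\max_{[a,x_p]}F$, $\hat F_-=\min_{[a,x_p]}F$ so that the bracketing inequalities go through. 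With these ingredients the proof is a direct transcription of Proposition~\ref{prop2}'s argument to the one-sided boundary setting.
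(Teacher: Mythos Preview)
Your proposal is correct and follows exactly the approach the paper indicates: the paper itself does not give a separate proof but simply writes ``By proceeding as in the proof of Proposition~\ref{prop2} we can show the following result,'' and your argument is precisely that transcription (representation via integrating the flux, uniqueness by strict monotonicity of $\beta\mapsto\int_a^{x_p}\psi_p(\beta-F)\,dt$, and the two bracketing inequalities under \eqref{up1}). Your observation that $\hat F_+=\min_{[a,x_p]}F$ in \eqref{F+} is a typo for $\max$ is also correct and necessary for the bracketing step to make sense.
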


 For the last case in Theorem \ref{main} we establish the following result that can be proved as Proposition \ref{prop2}.

\begin{prop}\label{prop3} 
Let  $ x_p\in (a,b)$   such that $-\triangle_pu_p=f$ in $(x_p, b),$ $u_p(x_p)=\varphi(x_p), $    $u_p(b)=0.$ If  \begin{equation}\label{up3}\frac{|u_p(b)-u_p(x_p)|}{b-x_p}\leq 1,\end{equation} then  there exists a unique value of $\beta$, say $\beta_p,$ such that 
  $$  u_p(x_p)= -\int_{x_p}^b \psi_p(\beta_p +F^*(t))dt$$
   where $\psi_p(s)= |s|^{\frac1{p-1}-1}s$ for $s\in \mathbb{R}$ and  $F^*(x)=\int_x^b f(t) dt.$ 
 
 Moreover, \begin{equation}\label{T+}\beta_p\in [T_- -1 , T_+ +1] \quad \text{where } T_{+}=\max_{[x_p,b] }(- F^*(x)), \quad  T_{-}=\min_{[x_p,b]} (- F^*(x)). \end{equation} 

\end{prop}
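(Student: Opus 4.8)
The plan is to establish Proposition \ref{prop3} by mimicking the structure of the proof of Proposition \ref{prop2}, but adapted to the geometry in which the detachment point is on the left at $x_p$ and the homogeneous Dirichlet condition $u_p(b)=0$ is on the right endpoint. First I would recall that, under hypotheses \eqref{f}, \eqref{triangle} and \eqref{LS}, the solution $u_p$ lies in $C^1([a,b])$, so that on the interval $(x_p,b)$ where $-\triangle_p u_p=f$ the Euler--Lagrange equation can be integrated explicitly. The natural move is to integrate the one-dimensional $p$-Laplacian equation $-\frac{d}{dx}\bigl(|u_p'(x)|^{p-2}u_p'(x)\bigr)=f(x)$ starting this time from the right endpoint $b$: writing $F^*(x)=\int_x^b f(t)\,dt$, a single integration gives $|u_p'(x)|^{p-2}u_p'(x)=\beta_p+F^*(x)$ for a constant $\beta_p$ determined by the boundary value of the flux at $b$. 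Inverting $\psi_p(s)=|s|^{\frac1{p-1}-1}s$ then yields $u_p'(x)=\psi_p(\beta_p+F^*(x))$, and integrating from $x$ to $b$ while using $u_p(b)=0$ produces the representation $u_p(x_p)=-\int_{x_p}^b \psi_p(\beta_p+F^*(t))\,dt$.

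Next I would argue existence and uniqueness of the constant $\beta_p$. Because $\psi_p$ is a continuous, strictly increasing bijection of $\mathbb{R}$, the map $\beta\mapsto -\int_{x_p}^b \psi_p(\beta+F^*(t))\,dt$ is continuous and strictly decreasing, with range all of $\mathbb{R}$; hence for the prescribed value $u_p(x_p)=\varphi(x_p)$ there is exactly one $\beta_p$ solving the matching condition. This monotonicity-plus-surjectivity argument is the clean way to pin down $\beta_p$ without ever solving for it explicitly, exactly paralleling the uniqueness assertion in Proposition \ref{prop2}.

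Finally, for the a priori bound \eqref{T+} I would follow the comparison scheme used in Proposition \ref{prop2}, but with the shifted integrand $\beta+F^*(t)$ and the quantities $T_+=\max_{[x_p,b]}(-F^*(x))$ and $T_-=\min_{[x_p,b]}(-F^*(x))$. The idea is to test the two candidate endpoint values $\beta=T_++1$ and $\beta=T_--1$ against the actual difference $u_p(b)-u_p(x_p)$, using the slope constraint \eqref{up3} in the form $|u_p(b)-u_p(x_p)|\le b-x_p$. For the upper endpoint one checks $-\int_{x_p}^b \psi_p(T_++1+F^*(t))\,dt \le u_p(b)-u_p(x_p)$ trivially when the right side is nonnegative and, when it is negative, by the pointwise estimate $T_++1+F^*(t)\ge 1\ge \bigl(\frac{u_p(x_p)-u_p(b)}{b-x_p}\bigr)^{p-1}$ that \eqref{up3} supplies; the lower endpoint is handled symmetrically. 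By the already-established monotonicity of $\beta\mapsto u_p(x_p)$ in $\beta$, these two inequalities confine $\beta_p$ to the interval $[T_--1,\,T_++1]$, which is \eqref{T+}.

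I expect the main obstacle to be purely bookkeeping: keeping the signs and the direction of integration consistent once the Dirichlet condition is moved to the right endpoint, since reversing orientation flips the sense of several inequalities and changes $F$ into $F^*$ with an interior sign that must be tracked carefully through the $\psi_p$-comparison. The analytic content is identical to Proposition \ref{prop2}; the real care lies in verifying that the slope bound \eqref{up3} still delivers the pointwise estimate on the correct side in this reflected configuration.
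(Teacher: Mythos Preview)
Your plan is exactly what the paper has in mind: it states only that Proposition~\ref{prop3} ``can be proved as Proposition~\ref{prop2}'', and your integration of the equation from the right endpoint $b$, the inversion through $\psi_p$, and the monotonicity/surjectivity argument for the constant $\beta_p$ reproduce that scheme verbatim in the reflected geometry.

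There is one bookkeeping slip of precisely the kind you anticipated. For the upper endpoint the target inequality should be
\[
-\int_{x_p}^{b}\psi_p\bigl(T_{+}+1+F^{*}(t)\bigr)\,dt\ \le\ u_p(x_p),
\]
not $\le u_p(b)-u_p(x_p)$: since your map $\beta\mapsto g(\beta):=-\int_{x_p}^{b}\psi_p(\beta+F^{*}(t))\,dt$ is strictly decreasing and $g(\beta_p)=u_p(x_p)$, the bound $\beta_p\le T_{+}+1$ is equivalent to $g(T_{+}+1)\le u_p(x_p)$. The corrected inequality follows at once from $T_{+}+1+F^{*}(t)\ge 1$ (hence $\psi_p(\cdot)\ge 1$ pointwise) together with \eqref{up3}, which gives $g(T_{+}+1)\le -(b-x_p)\le -|u_p(x_p)|\le u_p(x_p)$; the lower endpoint $\beta_p\ge T_{-}-1$ is handled symmetrically. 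With this fix your argument goes through and coincides with the paper's intended proof.
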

 
Now we prove Theorem  \ref{main}.
  
    \begin{proof}

  We split the proof in 4 steps.

  \textbf{Step 1.}
Let  the interval   $(d,e)$   is a  (connected) component of $ \Omega \setminus \Gamma_\infty$   such that $[d,e]\subset (a,b)$  and we assume that 
\begin{equation}  \label{step1}    
\begin{cases}
\text  {there exist } x_p>a,\,  \text  { and }    y_p\in (x_p,b) \,\,  \text  {such that} \,\,-\triangle_pu_p=f  \quad   in \quad (x_p, y_p),\\
u_p(x_p)=\varphi(x_p), \,  \,\,u_p(y_p)=\varphi(y_p), \,\,\, \ x_p\to d,\,\,    y_p\to e
\end{cases}
\end{equation}
 and \begin{equation}\label{up4}\frac{|u_p(y_p)-u_p(x_p)|}{y_p-x_p}\leq 1\end{equation}
By Proposition \ref{prop2}   there exists a unique value of $\beta$, say $\beta_p\in [F_{-,p}  -1, F_{+,p}  + 1],$  
such that 
  $$  u_p(y_p)= u_p(x_p)  +\int_{x_p}^{y_p} \psi_p(\beta_p -F_p^{**}(t))dt $$ 
  where $\psi_p(s)= |s|^{\frac1{p-1}-1}s$ for $s\in \mathbb{R}$ and  $F_p^{**}(x)=\int_{x_p}^x f(t) dt.$
 Moreover $F_{-,p}\to F_{-,d} ,$  $F_{+,p}\to F_{+,d},$  where 
 
 \begin{equation}\label{Fpd} F_{+,d}=\max_{[d,e] } F \qquad  F_{-,d}=\min_{[d,e]}  F \end{equation} 
 and $F$ is defined in (\ref{Fd}).
 We note that $F_{+,p}\leq F_+-F_- $ and $F_{-,p}\geq -F_++ F_-$ where \begin{equation}\label{F+a} F_+=\max_{[a,b] }  \int_a^x f(t)dt \qquad  F_-=\min_{[a,b]}  \int_a^x f(t)dt. \end{equation}

We set  $\delta(F)= 2(F_++1-F_-).$
  According to  Formula (iv) on page 419, Lemma 3.2 and 3.3 in  \cite{HL},  the following properties hold:

1 $\lim_{t\to r^-}h(t)= h(r)\leq \mu(\{x\in (d,e): F(x)\leq r\})=\lim_{t\to r^+}h(r),$ $F$ defined in (\ref{Fd});

2 $h(r)$ is strictly increasing in  $ [F_{-,d}, F_{+,d} ];$

 3 for $\beta\in [F_{-,p}  -1, F_{+,p}  + 1],$ $$ |   \psi_p(\beta -F_p^{**}(x))| \leq     \psi_p(\delta(F))\leq  \psi_1(\delta(F));$$

4 let $\alpha_j\in[F_{-,p}  -1, F_{+,p}  + 1]$ be a sequence converging to some $r\in \mathbb{R}$ and let $p_j$ be a sequence such that $p_j\to \infty.$ Then, for any $\phi\in L^1(\Omega),$
$$ \int_{O_-(r)}  \phi \,\psi_{p_j}(\alpha_j -F_{p_j}^{**}(x)) dx \to  \int_{O_-(r)}  \phi \, dx$$
$$ \int_{O_+(r)} \phi \,   \psi_{p_j}(\alpha_j -F_{p_j}^{**}(x)) dx \to  - \int_{O_+(r)}  \phi\, dx$$
  with 
$O_-(r)=\{x\in (d,e), F <r \}$
and 
$O_+(r)=\{x\in (d,e), F >r \}.$

In fact let $x\in O_-(r)$ then $r-F(x)=\delta_0>0$  there exist a positive constant $\delta$  and  an index $j_0$  such that  for any $j\geq j_0$ $$\delta \leq  \alpha_j -F_{p_j}^{**}(x)\leq \delta(F)$$ and so $$  \psi_{p_j}(\alpha_j -F_{p_j}^{**}(x))  \to  1.$$
By  property 3 and the  Lebesgue convergence we obtain the first limit.
If  $x\in O_+(r)$ then $r-F(x)= -\delta_0<0$  there exist a positive constant $\delta$  and  an index $j_0$  such that  for any $j\geq j_0$ $$-\delta(F) \leq  \alpha_j -F_{p_j}^{**}(x)\leq -\delta$$ and so $$  \psi_{p_j}(\alpha_j -F_{p_j}^{**}(x)) \to  1.$$
By  property 3 and the Lebesgue convergence we obtain the second limit. From \eqref{up4}, we deduce 
$$\frac{|\varphi (d)-\varphi (e)|}{e-d}\leq 1.$$
First we suppose that  
\begin {equation} \label{fimmag}\varphi (d)-\varphi (e)>d-e \end{equation} 
and we deduce that  $\beta^*\leq F_{+,d}.$  In fact if $ \beta^*> F_{+,d} $,  then $h(\beta^*)=e-d$  which  contradicts the inequality $h(\beta^*) \leq \frac {e-d-\varphi (d)+\varphi (e)}{2}< e-d$ that follows from  definition  (\ref{star1}).

Now we show that $$\lim_{p\to\infty}\beta_p=\beta^*.$$
First we prove that 
\begin {equation} \label{contra2} \liminf_{p\to\infty}\beta_p\geq\beta^*.
\end{equation}   
By contradiction we suppose that there exists  a sequence $p_j\to \infty$ such that  $\liminf_{p\to\infty}\beta_p=r<\beta^*.$
From the strictly monotonicity we have $$\lim_{t\to r^+}h(t)<h(\beta^*)\leq \frac{e-d-\varphi(d)+\varphi(e)}2.$$
Let $H=\{x\in (d,e), F \leq r\}$ and $L=\{x\in (d,e), F >r\}.$
Then we have $$\lim_{t\to r^+}h(t)=\mu(H)<\frac{e-d-\varphi(d)+\varphi(e)}2$$ (see property 1) and 
$$\mu(L)=e-d -\mu(H) >e-d  -\frac{e-d-\varphi(d) +\varphi(e)}2=\frac{e-d+\varphi(d)-\varphi(e)}2.$$
By property 3, we obtain $$\limsup_{j\to\infty} |\int_H\psi_{p_j}(\beta_{p_j} -F_{p_j}^{**}(x)) dx|\leq \limsup_{j\to\infty} \int_H \psi_{p_j}(\delta(F)) dx =\mu (H) <\frac{e-d-\varphi(d)+\varphi(e)}2.$$
By property 4, we obtain $$\lim_{j\to\infty} \int_L \psi_{p_j}(\beta_{p_j} -F_{p_j}^{**}(x)) dx= -\mu (L) < - \frac{e-d-\varphi(d)+\varphi(e)}2.$$
As $$u_{p_j}(y_{p_j})-u_{p_j}(x_{p_j})= \int_{x_{p_j}}^{y_{p_j}}\psi_{p_j} (\beta_{p_j} -F_{p_j}^{**}(x)) dx$$
passing to the limit for $j\to\infty$ we obtain 
$$\limsup_{j\to\infty} (u_{p_j}(y_{p_j})-u_{p_j}(x_{p_j}))< \varphi(e)-\varphi(d)$$ and that is a contradiction.
In fact $$\limsup_{j\to\infty} ( \varphi(e) -u_{p_j}(x_{p_j}))\leq \limsup_{j\to\infty}  (u_{p_j}(y_{p_j})-u_{p_j}(x_{p_j}))< \varphi(e)-\varphi(d)$$ that is 
 $$\liminf_{j\to\infty}  u_{{p_j}}(x_{p_j})=  \liminf_{j\to\infty}  \varphi(x_{p_j})> \varphi(d)$$
as $u_p(x_p)=\varphi(x_p), $    
$u_p(y_p)=\varphi(y_p)$  and $x_p\to d,$    $y_p\to e$
 by \eqref{step1}.

Now we prove that $$\limsup_{p\to\infty}\beta_p\leq\beta^*.$$
 Again by contradiction we suppose that there exists  a sequence $p_j\to \infty$ such that  $\limsup_{p\to\infty}\beta_p=r>\beta^*.$

Let $H=\{x\in (d,e), F \geq r\}$ and $L=\{x\in (d,e), F <r\}.$
Then we have $$\mu(L)>\frac{e-d-\varphi(d) +\varphi(e)}2$$ (see property 1) and 
$$\mu(H)=e-d -\mu(L) <e-d  -\frac{e-d-\varphi(d) +\varphi(e)}2=\frac{e-d+\varphi(d)-\varphi(e)}2.$$
By property 3, we obtain $$\liminf_{j\to\infty} \int_H\psi_{p_j}(\beta_{p_j} -F_{p_j}^{**}(x)) dx \geq - \mu (H) >   - \frac{e-d+\varphi(d)-\varphi(e)}2.$$
By property 4, we obtain $$\lim_{j\to\infty} \int_L \psi_{p_j}(\beta_{p_j} -F_{p_j}^{**}(x)) dx= \mu (L)>\frac{e-d-\varphi(d)+\varphi(e)}2.$$
As $$u_{p_j}(y_{p_j})-u_{p_j}(x_{p_j})= \int_{x_{p_j}}^{y_{p_j}} \psi_{p_j}(\beta_{p_j} -F_{p_j}^{**}(x)) dx,$$
 passing to the limit for $j\to\infty,$ we obtain 
$$  \liminf_{j\to\infty} ( u_{p_j}(y_{p_j})-u_{p_j}(x_{p_j}))>  \varphi(e)- \varphi(d)$$
$$  \liminf_{j\to\infty} u_{p_j}(y_{p_j})  -  \varphi(d)\geq  \liminf_{j\to\infty}  ( u_{p_j}(y_{p_j})-u_{p_j}(x_{p_j})) >  \varphi(e)- \varphi(d)$$
$$  \liminf_{j\to\infty} u_{p_j}(y_{p_j})  -  \varphi(e)>0$$  and this fact is a contradiction.

Now we prove that $|k|\leq 1$ where $k$ is defined in \eqref{cappa1}.
Let $[d, e]\subset \Omega.$   By property 1  ($ e-d=\mu(O_0)+\mu(O_-)+\mu(O_+)$)
$$\mu(O_-)=h(\beta^*) \leq \frac{e-d-\varphi(d)+\varphi(e)}2\leq \lim_{t\to (\beta^*)^+}h(t) =\mu(O_0)+\mu(O_-):$$
then 
$$0 \geq 2\mu(O_-)-  (e-d-\varphi(d)+\varphi(e))=  \mu(O_-)-  \mu(O_0)- \mu(O_+) -  (-\varphi(d)+\varphi(e)),$$
that is, 
$$   -\mu(O_0) \leq   -\mu(O_-)+ \mu(O_+) +  (-\varphi(d)+\varphi(e))$$
and $$0\leq 2\mu(O_0)+2\mu(O_-)-(e-d-\varphi(d)+\varphi(e)) =\mu(O_0)+\mu(O_-)- \mu(O_+) - (-\varphi(d)+\varphi(e)), $$ that is, 
$$\mu(O_0) \geq  -\mu(O_-)+\mu(O_+) +(-\varphi(d)+\varphi(e)).$$
Then, if $\mu(O_0)>0$
\begin {equation} \label{kos} 
 -1\leq k=\frac{ \mu(O_+)  -\mu(O_-)  -\varphi(d)+\varphi(e) } {\mu(O_0)} \leq 1.\end{equation}
Now we prove  that if  $\mu(O_0)>0$ then \begin{equation}\label{formulak}\lim_{p\to\infty} \psi_p(\beta_p-\beta^*)=k.\end{equation}
In fact, we have 
$$u_{p}(y_{p})-u_{p}(x_{p})= \int_{x_{p}}^{y_{p}}\psi_p(\beta_{p} -F_p^{**}(x)) dx=  \int_{x_{p}}^{d} \psi_p(\beta_{p} -F_p^{**}(x)) dx + \int_{e}^{y_{p}} \psi_p(\beta_{p} -F_p^{**}(x)) dx+$$
$$
 \int_{O_-} \psi_p(\beta_p -F_p^{**}(t))dt +  \int_{O_+} \psi_p(\beta_p -F_p^{**}(t))dt  + \psi_p(\beta_p -\beta^*-\int_d^{x_p} f dt  ) \mu(O_0).$$

As  $u_p(x_p)=\varphi(x_p), $    
$u_p(y_p)=\varphi(y_p)$  and $x_p\to d,$    $y_p\to e$
 by \eqref{step1}, by property 4, we obtain 
 $$    \lim_{p\to\infty} ( \varphi(y_p)- \varphi(x_p)) =\mu(O_-)-\mu(O_+) + \lim_{p\to\infty} \psi_p(\beta_p -\beta^* -\int_d^{x_p} f dt )\mu(O_0)$$
 and \eqref{formulak} is proved.
 
 For any   $x\in (d,e)$ we have, by \eqref{step1}, that 
 $x\in (x_p,y_p)$ (for  $p\geq p_0),$  and, by \eqref{eqHL2}, 
  $$ u_p(x)= \varphi(x_p)   +\int_{x_p} ^x \psi_p(\beta_p -F_p^{**}(t))dt = \int_{x_{p}}^{d} \psi_p(\beta_{p} -F_p^{**}(x)) dx +$$
$$\int^{x}_{d}   \chi_{O_-} \psi_p(\beta_p -F_p^{**}(t))dt +  \int^{x}_{d}   \chi_{O_+} \psi_p(\beta_p -F_p^{**}(t))dt  + \psi_p(\beta_p -\beta^*-\int_d^{x_p} f dt  ) \mu(O_0).$$

 By property 4 and  \eqref{formulak},
 passing to the limit,
\begin{equation}  \label{limj}    \lim_{p\to\infty}  u_p(x)=  \varphi(d)-  \int_d^x   \chi_{O_+}   dt + \int_d^x   \chi_{O_-}   dt + k \int_d^x   \chi_{O_0}   dt \end{equation}
 and we obtain \eqref{U}.
 
 To complete the proof of the theorem we have to consider the case $\varphi (d)-\varphi (e)=d-e.$  If $\varphi (d)-\varphi (e)=d-e$  then   \begin {equation*}
\beta^*= \sup\{ r\in \mathbb{R}: h(r)\leq d-e\}=+\infty \end {equation*} and then  $O_+=O_0=\emptyset$ and $O_-=(d,e).$   

By proceeding as in the  proof of  (\ref{contra2})  we show that  

\begin {equation} \label{contrabb} r= \liminf_{p\to\infty}\beta_p\geq F_{+,d}. \end{equation}

 Let the  sequence $p_j\to \infty$ be such that $\lim_{j\to\infty}\, \beta_{p_j}=r\geq F_{+,d}$  and denote by $O_-(r)=\{x\in (d,e), F <r \},\,\,O_0(r)=\{x\in (d,e), F =r \}$
and $O_+(r)=\{x\in (d,e), F >r \} $  then $O_+(r)= \emptyset.$ We discuss first the case  $r=F_{+,d}$

We  proceed as in the proof of (\ref{kos})  to  show that  if  $\mu(O_0(F_{+,d}))>0$ then $k=1$ where 

\begin{equation}\label{kappah11} \,\,\,
k= \begin {cases}
   \frac{   - \mu(O_-(F_{+,d}))  - \varphi(d)+ \varphi(e) } { \mu(O_0(F_{+,d}))}   \qquad &if \qquad   \mu(O_0(F_{+,d}))>0, \\
 0 \qquad &if \qquad   \mu(O_0(F_{+,d}))=0. \end{cases}
  \end{equation}
Analogously  we proceed as  in the proof of (\ref{formulak}) and of  (\ref{limj}) to show that if  $\mu(O_0(F_{+,d}))>0$ then \begin{equation}\label{formulakcc}\lim_{j\to\infty} \psi_{p_j}(\beta_{p_j}-F_{+,d})=1\end{equation}

\noindent and
 $$\lim_{j\to\infty}  u_{p_j}(x)=  \varphi(d) + \int_d^x   \chi_{O_-(F_{+,d})}   dt + \int_d^x   \chi_{O_0(F_{+,d})}   dt =\varphi(d)+x -d $$ \noindent and  \eqref{U}  is proved.

Finally  for any sequence $p_j\to \infty$ be such that $\lim_{j\to\infty}\, \beta_{p_j}=r^* > F_{+,d}$   we have $O_+(r^*)=O_0(r^*)= \emptyset,\,\,\, O_-(r^*)=(d,e), \,\,\,k=0$ and

 $$\lim_{j\to\infty}  u_{p_j}(x)= \varphi(d)+x -d $$ 

\noindent and  \eqref{U}  is proved.

\textbf{Step 2.} We remove assumption (\ref{up4}). We start by noticing that, if (\ref{up4}) does not hold, by property (\ref{limsup}) we deduce that 
$$ \limsup\frac{|u_p(y_p)-u_p(x_p)|}{y_p-x_p}=1 $$ as $p\to \infty.$    Actually  there exists the limit  (see \ref{step1}) $$\lim\frac{|u_p(y_p)-u_p(x_p)|}{y_p-x_p}=\lim\frac{|\varphi(y_p)-\varphi(x_p)|}{y_p-x_p}=\frac{|\varphi(e)-\varphi(d)|}{e-d}=1.$$
According to Remark 2.2, Theorem 2.1 and Proposition 2.1, the limit function  $U(x)$  is the affine function connecting the points $(d,\varphi(d))$ and $(e,\varphi(e))$ (see formula (\ref{U0})) which  coincides  with the function defined in (\ref{U}).

\textbf{Step 3.}  We discuss  assumption (\ref{step1}).
 As  the interval  $(d,e)$ is a (connected) component  of $ \Omega \setminus \Gamma_\infty$  (and  $[d,e]\subset (a,b)$) by the definition of $\Gamma_\infty$ there exist $x_p \in (a,b)$ and  $y^*_p\in (a, b)$ such that $x_p<y^*_p, $ \,\,\, $u_p(x_p)=\varphi(x_p)$ and $x_p\to d,$
 $u_p(y^*_p)=\varphi (y^*_p)$ and $y^*_p\to e.$
   We discuss now the property 
 \begin{equation}\label{equa}    -\triangle_p u_p=f  \,\, \text{in} \,\,(x_p, y_p) .\end{equation}
Let $z_p$ the first point  $ z_p\in (x_p,y^*_p)$  such that $u_p$ meets the obstacle i.e. $u_p(z_p)=\varphi(z_p).$
First we note that   $\limsup z_p\leq \lim y^*_p=e$ and   $\liminf z_p\geq \lim x_p=d$  hence if $\liminf z_p=e$ then  $z_p \to e,$  property (\ref{equa}) holds in the interval $(x_p, z_p)$ and we  choose $y_p=z_p$.

Furthermore if there exists a sequence $z_{p_j}$ converging to  some $\eta\in (d,e)$  such that $u_{p_j} (x)=\varphi (x),\,\, \forall  x\in [z_{p_j}, z_{p_j}+ \delta_{p_j}],\,\,\delta_{p_j}>0$ 
 then by assumption (\ref{Lambda})  we deduce that $\limsup\delta_{p_j} =0.$  In fact if  $\limsup \delta_{p_j} =\delta_0>0$  then there exists $ \delta >0 $ such that the interval $[\eta,\eta+  \delta]$ is contained in  $ \overline {int \Gamma_\infty^* }\bigcap (d,e)$  and this is a contradiction  with the fact that $(d,e)\bigcap\Gamma_\infty=\emptyset.$  
  If $\limsup \delta_{p_j} =0$ then the interval $[z_{p_j}, z_{p_j}+\delta_{p_j}]$ vanishes and the limit function $U(x)$ is not affected by these vanishing contacts (see Remark 2.2). 

If $\eta=d$  the interval $[x_{p_j}, z_{p_j}]$ vanishes and the limit function $U(x)$ is not affected by these vanishing contacts (see Remark 2.2). Similar arguments hold for the choice of the points  $x_p.$

\textbf{Step  4.}  If the interval  $(a,c)$ is a  (connected) component  of $\Omega\setminus \Gamma_\infty$  we proceed in a similar manner using  Proposition \ref{prop1}. If the interval  $(d,b)$ is a  (connected) component  of $\Omega\setminus \Gamma_\infty$  we proceed in a similar manner using  Proposition \ref{prop3}.
\end{proof}

\begin {remark}\label {nono} We note that  a theorem  analogous of Theorem \ref{main} holds for obstacle problems with non homogeneous boundary conditions. We skip the proof, which can be easily done  by modifying the proof of Theorem \ref{main} and taking into account the results of Section 2 concerning the Dirichlet problem with non homogeneous boundary conditions.
\end{remark}

\begin {remark}\label {nolim} We note a peculiarity of the limit of solutions to obstacle Problems (\ref{OPp}). If the right hand  term in the Lewy-Stampacchia inequality  \eqref{LS} is uniformly bounded, then (up to pass to a subsequence) there exists the weak  limit   $f^*$ of the functions  $-\bigtriangleup_pu_p.$ However  the limit  $U^*$ of the solutions  $u_p^*$ of  Dirichlet Problems  (\ref{DP})  with datum  $f^*$  may  not  coincide  with the limit  of the solutions to obstacle Problems (\ref{OPp}).  We can construct examples in which   $U^*$  belongs to the convex $\mathcal{K}^\infty$ but it is not  a maximizer of  (\ref{eq:3DV}) (Example 2 in Section \ref{example} ) as well as examples in which   $U^*$ does not  belongs to the convex (Example 1 in Section \ref{example}).
\end{remark}

\section{$n$-dimensional  Obstacle problem}

 First we consider the radial case.
 
 Let $\Omega$ be the annulus $B_{r_1,r_2}:=\{x\in \mathbb{R}^n,  r_1<|x|< r_2      \},$  $ 0<r_1<r_2,$   \begin {equation} \label {iporad} f(x)=g(|x|)  \quad \text{and}  \quad\varphi(x)=\Phi(|x|). \end{equation}

\begin{tm} \label {radial} Suppose that  (\ref{f}), (\ref{triangle}), (\ref{knonv}),  (\ref{Lambda}) and \eqref{iporad} hold.
Then the solutions $u_{p}$  of Problems (\ref{OPp}) converge uniformly to the following function $U\in \mathcal{K}^{\infty}:$
 $$ U(x)=\varphi(x)\, \text{in}\,\, \Gamma_\infty $$ and  for any (connected) component $B_{d,e}$ of $ \Omega \setminus \Gamma_\infty$ such that $[d, e]\subset(r_1,r_2)$   
 \begin {equation} \label {Ur}  U(x)=\int_d^{|x|} (\chi_{O_-}-\chi_{O_+}+k \chi_{O_0}) dt +\Phi(d)\end {equation} 
where 
$$O_-=\{t\in (d,e), G <\beta^* \}, \quad\quad O_+=\{t\in (d,e), G >\beta^* \},\quad\quad O_0=\{t\in (d,e), G =\beta^* \}$$
$$G(t)=\int_d^t  \tau^{n-1} g(\tau)d\tau, \quad \quad h(r)=\mu(\{t\in (d,e) : G(t)<r\})$$

\begin {equation} \label {star1r}
\beta^*= \sup\{ r\in \mathbb{R}: h(r)\leq\frac{e-d-\Phi(d)+\Phi(e)}2\}\, 
\end {equation}

\begin {equation} \label {cappa1r}
k= \begin{cases}
   \frac{ \mu(O_+)  -\mu(O_-)  -\Phi(d)+\Phi(e) } {\mu(O_0)}   \qquad if \qquad  \mu(O_0)>0,\\
 0 \qquad if \qquad  \mu(O_0)=0.\\
\end{cases}
\end {equation}

For any (connected) component $B_{r_1,c}$ of $ \Omega \setminus \Gamma_\infty$    
 \begin {equation} \label {U1r}  U(x)=\int_{r_1}^{|x|} (\chi_{O_-}-\chi_{O_+}+k \chi_{O_0}) dt \end {equation} 
where 
$$O_-=\{t\in (r_1,c), G <\beta^* \}, \quad\quad O_+=\{t\in (r_1,c), G >\beta^* \},\quad\quad O_0=\{t\in (r_1,c), G =\beta^* \}$$
$$G(t)=\int_{r_1}^t \tau^{n-1} g(\tau)d\tau, \quad \quad h(r)=\mu(\{t\in (r_1,c) : G(t)<r\})$$

\begin {equation} \label {star2r}
\beta^*= \sup\{ r\in \mathbb{R}: h(r)\leq\frac{c- r_1+\Phi(c)}2\}\,
\end {equation}

\begin {equation} \label {cappa2r}
k= \begin{cases}
  \frac{ \mu(O_+)  -\mu(O_-)  +\Phi(c) } {\mu(O_0)}   \qquad if \qquad  \mu(O_0)>0,  \\  
 0 \qquad if \qquad  \mu(O_0)=0.\\
\end{cases}
\end {equation}

For any (connected) component    $B_{d,r_2}$   of $ \Omega \setminus \Gamma_\infty$  
 \begin {equation} \label {U2r}  U(x)=\int_{r_2}^{|x|} (\chi_{O_-}-\chi_{O_+}+k \chi_{O_0}) dt \end {equation} 
where 
$$O_-=\{t\in (d,r_2), G <\beta^* \}, \quad\quad O_+=\{t\in (d,r_2), G>\beta^* \},\quad\quad O_0=\{t\in (d,r_2), F =\beta^* \}$$
$$G(t)=\int_{r_2}^t  \tau^{n-1} g(\tau)d\tau,  \quad \quad h(r)=\mu(\{t\in (d,r_2) : G(t)<r\})$$

\begin {equation} \label {star3r}
\beta^*= \sup\{ r\in \mathbb{R}: h(r)\leq\frac{r_2-d-\Phi(d)}2\}\, 
\end {equation}

\begin {equation} \label {cappa3r}
k= \begin{cases}
   \frac{ \mu(O_+)  -\mu(O_-)  -\Phi(d) } {\mu(O_0)}   \qquad if \qquad  \mu(O_0)>0,\\
 0 \qquad if \qquad  \mu(O_0)=0.\\
\end{cases}
\end {equation}

  \end{tm}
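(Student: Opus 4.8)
The plan is to reduce the radial $n$-dimensional problem to the one-dimensional setting already handled in Theorem \ref{main}, by passing to polar coordinates and exploiting the radial symmetry of the data \eqref{iporad}. The first step is to observe that, since $f(x)=g(|x|)$ and $\varphi(x)=\Phi(|x|)$ are radial and the $p$-Laplacian obstacle problem \eqref{OPp} on the annulus $B_{r_1,r_2}$ has a unique solution $u_p$, this solution must itself be radial: $u_p(x)=w_p(|x|)$. This follows from uniqueness together with rotational invariance of the functional and of the constraint set $\mathcal K$—any rotation of $u_p$ is again a solution, hence equals $u_p$. I would record this reduction explicitly, since it is what makes the whole argument go through.

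The second step is to write out the radial form of the equation $-\triangle_p u_p=f$ on the noncoincidence set. In polar coordinates, for a radial function $w(t)$ with $t=|x|$, one has
\begin{equation*}
-\triangle_p u_p = -t^{1-n}\frac{d}{dt}\Bigl(t^{n-1}|w_p'(t)|^{p-2}w_p'(t)\Bigr),
\end{equation*}
so that on any interval $(x_p,y_p)$ where $u_p$ is detached from the obstacle the radial profile satisfies
\begin{equation*}
-\frac{d}{dt}\Bigl(t^{n-1}|w_p'(t)|^{p-2}w_p'(t)\Bigr)=t^{n-1}g(t).
\end{equation*}
Integrating once from $d$ shows that $t^{n-1}|w_p'|^{p-2}w_p'=\beta_p-G(t)$ where $G(t)=\int_d^t\tau^{n-1}g(\tau)\,d\tau$ is exactly the weighted primitive appearing in the statement, and $\beta_p$ is the constant of integration. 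This is the precise analogue of \eqref{eqHL2}; the weight $t^{n-1}$ is absorbed into the definition of $G$, which is why the theorem replaces $F(x)=\int_a^x f$ from the one-dimensional case by $G(t)=\int_d^t\tau^{n-1}g(\tau)\,d\tau$.

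The third step is to note that solving for $w_p'$ gives
\begin{equation*}
w_p'(t)=\Bigl(t^{1-n}\Bigr)^{\frac1{p-1}}\psi_p\bigl(\beta_p-G(t)\bigr),\qquad \psi_p(s)=|s|^{\frac1{p-1}-1}s,
\end{equation*}
and here is the crucial simplification as $p\to\infty$: the prefactor $(t^{1-n})^{1/(p-1)}\to1$ uniformly on the compact interval $[d,e]\subset(r_1,r_2)$ (where $t$ is bounded away from $0$ and $\infty$). Thus in the limit the radial profile behaves exactly like the one-dimensional solution driven by the primitive $G$, and the asymptotic analysis—selecting $\beta^*$ via the level-set measure function $h(r)=\mu(\{G<r\})$, the convergence $\beta_p\to\beta^*$, the identification of the slope $k$ on the flat set $O_0$—goes through verbatim as in Steps 1 through 4 of the proof of Theorem \ref{main}, with $F$ replaced by $G$ and $\varphi(d),\varphi(e)$ replaced by $\Phi(d),\Phi(e)$. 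Condition \eqref{Lambda} is used exactly as before, in the analogue of Step 3, to control the coincidence sets $\Gamma_p$ and to justify the choice of the contact points $x_p,y_p$. The three displayed formulas \eqref{Ur}, \eqref{U1r}, \eqref{U2r} correspond to the three cases $[d,e]\subset(r_1,r_2)$, a component touching the inner boundary $r_1$, and a component touching the outer boundary $r_2$, matching the three cases of Theorem \ref{main}.

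**The main obstacle** I expect is the uniform control of the weight $(t^{1-n})^{1/(p-1)}$ and its interaction with the delicate limit arguments of property 4 in the proof of Theorem \ref{main}: one must verify that inserting this $p$-dependent, but $p\to1$-converging, prefactor does not disturb the dominated-convergence steps that send $\psi_{p_j}(\alpha_j-G_{p_j})$ to $\pm1$ on $O_\mp$. Since $t$ ranges over a compact subinterval bounded away from $0$, the prefactor is bounded above and below by positive constants tending to $1$, so the domination estimate (property 3) survives with a harmless constant and the pointwise limits are unchanged; making this precise is routine but is the one place where the radial case genuinely differs from the flat one. The reduction to a radial ODE and the uniform convergence of the weight are what allow the entire machinery of Section 3 to be reused, so I would present the proof as ``proceed exactly as in Theorem \ref{main} with $F$ replaced by $G$, observing that the weight $t^{n-1}$ is uniformly comparable to a constant on $[d,e]$,'' and verify only the modified estimates in detail.
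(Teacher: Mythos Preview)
Your proposal is correct and follows exactly the approach the paper intends: the paper's own proof consists solely of the sentence ``We skip the proof, as it is very similar to the proof of Theorem \ref{main},'' and your reduction via radial symmetry---writing $u_p(x)=w_p(|x|)$, deriving the radial ODE $-\frac{d}{dt}\bigl(t^{n-1}|w_p'|^{p-2}w_p'\bigr)=t^{n-1}g(t)$, and observing that the weight $(t^{1-n})^{1/(p-1)}\to1$ uniformly on $[r_1,r_2]$---is precisely what makes the proof ``very similar.'' You have in fact supplied more detail than the paper does, and your identification of the one genuine point of care (the harmless $p$-dependent prefactor in the dominated-convergence steps) is apt.
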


We skip the proof, as it is very similar to the proof of Theorem \ref{main}.
We note that in the previous results the solutions $u_p$ converge uniformly to the function $U$ as $p\to\infty$  even if  Problem $\eqref{eq:3DV}$ does not have unique solution.

\begin{remark}\label{cerchio} If $\Omega$ is the ball $B_{r}:=\{x\in \mathbb{R}^n, |x|< r     \},$  $r>0,$ then under the assumptions of Theorem \ref{radial}, 
 the same results hold  except for the case of the (connected) component     $B_{0,c}$  of $ \Omega \setminus \Gamma_\infty$    where formula \eqref{U1r} becomes
 
 \begin {equation} \label {U1r0}  U(x)=\int_{c}^{|x|} (\chi_{O_-}-\chi_{O_+}) dt + \Phi(c)\end {equation} 
where 
$$O_-=\{t\in (0,c), G <0 \}, \quad\quad O_+=\{t\in (0,c), G >0 \},$$
$$G(t)=\int_{0}^t \tau^{n-1} g(\tau)d\tau.$$
\end{remark}
The following results concern arbitrary domains  hence  as we do not assume any smoothness condition on the boundaries  these results hold true for \emph{bad domains} as the Koch Islands (see \cite {CF}, \cite {CV} and \cite {CFV}).

We denote  $$\mathcal{M}_\varphi=\{u\in \mathcal{K}^\infty: \mathcal{F}(u)=\max_{w\in \mathcal{K}^\infty} \mathcal{F}(w)\}$$
and 
\begin {equation} \label {AMS} 
\mathcal{A}_\varphi=\{  u\in C(\bar{\Omega}): \text  {there exists a sequence} \,\, p_j\to \infty \quad \text{such that}\,\,  u_{p_j} \to u \quad \text{in} \quad C(\bar{\Omega}) \}\end {equation}  
where $u_p$ denotes the  solution to $(\ref{OPp})$.

Condition (\ref{Lambda}) is satisfied in all the examples of Section 
\ref{example} and Theorem 2.8 in \cite{RTU} as well as Theorems  7.5  and
1.3 in \cite{BDR} provide sufficient conditions to assure that condition
(\ref{Lambda}) holds true. However in \cite{RTU} and \cite{BDR} 
smoothness assumptions are required while in our paper we deal with a
larger class of obstacles and data. Then we are interested in proving that
the set $\mathcal{A}_\varphi $  defined in (\ref{AMS}) is a singleton by 
a different approach
(see Theorems  \ref{tf},  \ref{tf3},   \ref{tf2} and  \ref{tf4}).  
Condition (\ref{f3})  in Theorem  \ref{tf},  that  concerns data $f$ 
changing sign in $\Omega,$ puts in relation  the position of the support 
of $f$  with respect   to the  boundary of  $\Omega$ and provides an alternative
assumption that, in some sense, forces the coincidence sets  to have a
\emph{good} behavior.  Similarly, the sign conditions  on the datum $f$  in
Theorems    \ref{tf3},   \ref{tf2} provide  alternative assumptions.
Finally, we recall that, as the constraint  in the convex $\mathcal{K}$ is
from below, then  as a consequence of the Lewy-Stampacchia inequality
\eqref{LS}, the  easy situation is when $f $  (possibly vanishing  in a
set of positive measure) is non negative  while, when $f $ is non
positive, we have to require also  conditions on $-\triangle_p\varphi$ 
(see \eqref{f115}  and  \eqref{f116} respectively).

\begin{tm}
\label{tf} Suppose that  \eqref{f},  \eqref{triangle}  and \eqref{knonv}  hold,  and 
\begin {equation} \label {f2}\Omega_+ \quad \text{and}\quad  \Omega_-  \,  \text{are open connected and non empty} \end{equation}
where $$\Omega_+=\{  x\in \Omega, f(x)>0 \} \quad \text{and}\quad  \Omega_-=\{  x\in \Omega, f(x)<0 \} $$
and
\begin {equation} \label {f3}     \inf_{x\in \Omega_+ } \sup_{y\in \Omega_- }(d(x)+d(y)-|x-y|) \leq 0  \end{equation}
where $d(x)$ denotes the distance of $x$ from the boundary.  Then the set $\mathcal{A}_\varphi $  defined in (\ref{AMS}) is a singleton.
\end{tm}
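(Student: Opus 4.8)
The plan is to prove that $\mathcal{A}_\varphi$ reduces to a single function by identifying, under \eqref{f2} and \eqref{f3}, one explicit candidate limit and then showing every subsequential limit must equal it. By the precompactness recalled in the Introduction (a consequence of \eqref{knonv} and \eqref{limsup}), $\mathcal{A}_\varphi$ is nonempty and every $u\in\mathcal{A}_\varphi$ lies in $\mathcal{K}^\infty$, maximizes $\mathcal{F}$ over $\mathcal{K}^\infty$, vanishes on $\partial\Omega$ and is $1$-Lipschitz; hence $-d(x)\leq u(x)\leq d(x)$ and $u\geq\varphi$ on $\Omega$, where $d(\cdot)$ denotes the distance to $\partial\Omega$. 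Thus it suffices to show that such a maximizer is uniquely determined.

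First I would pass to the limit in the viscosity formulation. For fixed $p$, the variational inequality \eqref{OPp} is equivalent to the complementarity relation $\min\{-\triangle_p u_p-f,\,u_p-\varphi\}=0$ understood in the viscosity sense, and the Lewy--Stampacchia bound \eqref{LS} controls $-\triangle_p u_p$ uniformly enough to pass to the limit. By stability of viscosity solutions under uniform convergence, any $u=\lim u_{p_j}\in\mathcal{A}_\varphi$ is a viscosity solution of the obstacle problem for the $\infty$-Laplacian: $u$ is $\infty$-superharmonic above $\varphi$, it is $\infty$-harmonic on the set where it detaches from $\varphi$ and $f=0$, and it saturates the gradient constraint $|\nabla u|=1$ where $f>0$. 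This is where \eqref{f2} enters. Since $\Omega_+$ is connected and $f>0$ there, cone comparison propagates the saturation and forces $u$ to coincide on $\overline{\Omega}_+$ with the maximal admissible profile $d$; symmetrically, since $\Omega_-$ is connected and $f<0$ there, the functional $\mathcal{F}$ pushes $u$ down to the minimal admissible profile $\max(\varphi,-d)$ on $\overline{\Omega}_-$.

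The crux is to check that these two one-sided determinations are mutually compatible and leave no intermediate freedom, and this is exactly the content of \eqref{f3}. I would show that \eqref{f3} produces a point $x_\ast\in\Omega_+$ (or a minimizing sequence) with $|x_\ast-y|\geq d(x_\ast)+d(y)$ for every $y\in\Omega_-$, which is precisely the McShane--Whitney compatibility allowing a single $1$-Lipschitz function to descend from the value $d(x_\ast)$ on $\Omega_+$ to the minimal value $-d(y)$ near $\Omega_-$ without violating the Lipschitz bound. Geometrically, \eqref{f3} decouples $\Omega_+$ from $\Omega_-$, so that maximizing the contribution on $\Omega_+$ and minimizing it on $\Omega_-$ can be achieved simultaneously; the upper bound $\mathcal{F}(u)\leq\int_{\Omega_+}d\,f\,dx+\int_{\Omega_-}\max(\varphi,-d)\,f\,dx$, valid for every $u\in\mathcal{K}^\infty$, is then attained, and equality pins $u$ down to $d$ on $\Omega_+$ and to $\max(\varphi,-d)$ on $\Omega_-$. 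On the remaining region $\{f=0\}$ the value of $u$ is fixed by its characterization as the smallest continuous $\infty$-superharmonic function above $\varphi$ compatible with these data, i.e.\ as the (unique) AMLE; this characterization is manifestly independent of the subsequence.

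Finally, the comparison principle for $\infty$-superharmonic functions (which does not require regularity of $\partial\Omega$, so the argument applies to the irregular domains mentioned after \eqref{AMS}) forces all elements of $\mathcal{A}_\varphi$ to coincide, so $\mathcal{A}_\varphi$ is a singleton. I expect the main obstacle to be the decoupling step: verifying rigorously that \eqref{f3} together with the connectedness \eqref{f2} rules out any alternative $\infty$-harmonic continuation or intermediate plateau between $\Omega_+$ and $\Omega_-$, so that the two barriers actually meet and the upper bound is saturated. Closely tied to this is controlling the limit of $-\triangle_p u_p$ across the free boundary and on the zero set of $f$, where \eqref{LS} only provides one-sided information; the geometric separation \eqref{f3} is precisely what compensates for this lack of control.
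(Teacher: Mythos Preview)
Your overall architecture (identify the limit on $\operatorname{supp} f$, then use the $\infty$-superharmonic obstacle characterization off $\operatorname{supp} f$) matches the paper's second half, but the first half contains a genuine gap: the claim that every $u\in\mathcal{A}_\varphi$ equals $d$ on $\Omega_+$ and $\max(\varphi,-d)$ on $\Omega_-$ is not supported by \eqref{f3}. You read \eqref{f3} as a McShane--Whitney compatibility between the prescribed values $d|_{\Omega_+}$ and $-d|_{\Omega_-}$, but \eqref{f3} only furnishes \emph{one} point $x_\ast\in\Omega_+$ (or an approximating sequence) with $d(x_\ast)+d(y)\le|x_\ast-y|$ for all $y\in\Omega_-$; the compatibility needed to build a $1$-Lipschitz extension equal to $d$ on \emph{all} of $\Omega_+$ and to $-d$ on $\Omega_-$ would require the same inequality for \emph{every} $x\in\Omega_+$, which \eqref{f3} does not provide. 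Consequently your displayed upper bound for $\mathcal{F}$ need not be attained, and neither ``$u=d$ on $\Omega_+$'' nor ``$u=\max(\varphi,-d)$ on $\Omega_-$'' follows. The eikonal-type statement ``$|\nabla u|=1$ on $\Omega_+$'' does not pin $u$ down without boundary data on $\partial\Omega_+$, which are precisely what is at issue.

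The paper avoids this entirely: it never identifies the value of a maximizer on $\operatorname{supp} f$, it only shows that any two maximizers \emph{agree} there. The key inputs (borrowed from Ishii--Loreti) are: (i) for any $u,v\in\mathcal{M}_\varphi$ one has $\nabla u=\nabla v$ a.e.\ on $\Omega_+$, so on the connected set $\Omega_+$ they differ by a constant $h$; (ii) the representation $v(x)=\inf\{v(y)+|x-y|:y\in\Omega_-\cup\partial\Omega\}$; and (iii) $\sup_{\Omega_+}(u-v)^+=\sup_{\Omega_-}(u-v)^+$. Condition \eqref{f3} is then used \emph{only at the single point} $x_\varepsilon$: combining $u(x_\varepsilon)\le d(x_\varepsilon)$ with (ii) and $v\ge -d$ gives $d(x_\varepsilon)\le v(x_\varepsilon)+\varepsilon$, hence $h\le\varepsilon$, hence $h=0$; (iii) then propagates $u=v$ to $\Omega_-$. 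After that the paper proceeds exactly as you do on $\Omega\setminus\operatorname{supp} f$. So the repair is to replace your ``$u=d$ / $u=\max(\varphi,-d)$'' step by the constant-difference argument (i)--(iii); this is the missing idea.
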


We just observed $\mathcal{A}_{\varphi}\subset\mathcal{ M}_{\varphi}$ and before proving this theorem, we state same preliminary results.

 \begin{prop}
 Let $u\in \mathcal{M}_\varphi$ then 
 \begin{equation} \label {u1} u(x)=\inf\{   u(y)+|x-y|, y\in \Omega_- \cup \partial \Omega\}, \forall x\in \Omega\end{equation}

  \begin{equation} \label {u2} u(x)=\sup\{   u(y)-|x-y|, y\in \Omega_+ \cup \partial \Omega\}  \vee\varphi^*(x), \forall x\in \Omega\end{equation}
  
  where $$\varphi^*(x)=\sup\{   \varphi (y)-|x-y|, y\in \Omega\} .$$
 
  \end{prop}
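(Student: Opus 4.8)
The plan is to characterize members of $\mathcal{M}_\varphi$ via the optimal-transport / AMLE structure underlying Problem \eqref{eq:3DV}. The key observation is that a maximizer $u$ of $\mathcal{F}$ over $\mathcal{K}^\infty$ must "spend" all its available slope: since $\|\nabla u\|_{L^\infty}\le 1$ and we wish to maximize $\int u f$, the function should climb as steeply as possible (slope $+1$) in the direction that increases $f>0$ regions and descend as steeply as possible (slope $-1$) where $f<0$, subject to the obstacle constraint $u\ge\varphi$ and the boundary condition $u\le 0$ on $\partial\Omega$ (which, together with $\mathrm{Lip}_1$, forces $u(x)\le d(x)$). The two formulas \eqref{u1} and \eqref{u2} are precisely the statements that $u$ is the largest $\mathrm{Lip}_1$ function pinned from above by its values on $\Omega_-\cup\partial\Omega$, and simultaneously the smallest one pinned from below by its values on $\Omega_+\cup\partial\Omega$ together with the obstacle.

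\textbf{First I would establish \eqref{u1}.} I would denote the right-hand side by $\tilde u(x)=\inf\{u(y)+|x-y|:y\in\Omega_-\cup\partial\Omega\}$. One inequality is immediate: since $u\in\mathrm{Lip}_1$, for every $y$ one has $u(x)\le u(y)+|x-y|$, so $u\le\tilde u$. For the reverse inequality I would argue by contradiction using the maximality of $u$: suppose $u(x_0)<\tilde u(x_0)$ at some point, meaning $u$ has strict room to increase at $x_0$ without violating any constraint coming from $\Omega_-\cup\partial\Omega$. The candidate competitor is $w=u\vee(\tilde u\wedge \text{(small bump)})$, or more cleanly the function obtained by raising $u$ toward $\tilde u$ on the connected open set where the gap is positive. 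One checks $w\in\mathrm{Lip}_1$ (inf of $1$-Lipschitz functions is $1$-Lipschitz), $w\ge u\ge\varphi$, and $w=0$ on $\partial\Omega$ where $u=0$; hence $w\in\mathcal{K}^\infty$. Since raising $u$ on a region touching $\Omega_+$ strictly increases $\int wf$ while leaving the $\Omega_-$ and $f=0$ contributions nonnegative, we get $\mathcal{F}(w)>\mathcal{F}(u)$, contradicting maximality. The role of \eqref{f2} (that $\Omega_+$ is open, connected, nonempty) is to guarantee that such a raising genuinely increases the functional.

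\textbf{Next I would establish \eqref{u2} dually.} Here I would set $\hat u(x)=\sup\{u(y)-|x-y|:y\in\Omega_+\cup\partial\Omega\}\vee\varphi^*(x)$ and show $u=\hat u$ by the symmetric argument: $u\ge\hat u$ follows from $u\in\mathrm{Lip}_1$ and $u\ge\varphi$ (note $\varphi^*$ is the $\mathrm{Lip}_1$-regularization of $\varphi$, the largest $\mathrm{Lip}_1$ minorant-type envelope, so $u\ge\varphi$ gives $u\ge\varphi^*$), while $u\le\hat u$ follows because if $u$ exceeded $\hat u$ somewhere we could \emph{lower} $u$ there — staying above $\varphi^*$ and above the values forced by $\Omega_+\cup\partial\Omega$ — and thereby increase $\int uf$ on the $\Omega_-$ side where $f<0$, again contradicting maximality. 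The obstacle term $\varphi^*$ enters here precisely because lowering is obstructed from below by $\varphi$, which is why the obstacle appears in \eqref{u2} but not in \eqref{u1}.

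\textbf{The main obstacle I anticipate} is making the "raise $u$ / lower $u$ on a region" perturbation argument rigorous while checking it stays in $\mathcal{K}^\infty$ and strictly changes $\mathcal{F}$: one must verify the modified function remains $1$-Lipschitz (cleanest by expressing the competitor itself as an inf or sup of $1$-Lipschitz functions, which is automatically $1$-Lipschitz), respects $u\ge\varphi$ and the boundary data, and that the sign of $f$ on the perturbed set makes $\mathcal{F}$ strictly increase. The connectedness in \eqref{f2} is what lets the perturbation propagate across all of $\Omega_+$ (resp. $\Omega_-$) so that the inf/sup envelopes over $\Omega_-\cup\partial\Omega$ (resp. $\Omega_+\cup\partial\Omega$) are the correct extremal objects. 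Once \eqref{u1} and \eqref{u2} are in hand, they pin down $u$ on $\Omega_+$ and $\Omega_-$ respectively; the uniqueness assertion of Theorem~\ref{tf} then follows by combining these representations with condition \eqref{f3}, which forces the two one-sided envelopes to agree on the remaining region and thus makes $\mathcal{A}_\varphi$ a singleton.
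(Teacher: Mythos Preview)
Your approach is essentially the paper's: use the envelope function itself as a competitor in $\mathcal{K}^\infty$ and invoke maximality of $\mathcal{F}(u)$. The paper's execution (for \eqref{u2}; it refers \eqref{u1} to Ishii--Loreti) is more direct than your contradiction/perturbation framing: it simply sets $w(x)=\sup\{u(y)-|x-y|:y\in\Omega_+\cup\partial\Omega\}\vee\varphi^*(x)$, checks that $w\in\mathcal{K}^\infty$, that $u\ge w$, and that $w=u$ on $\Omega_+\cup\partial\Omega$; then $\mathcal{F}(w)\le\mathcal{F}(u)$ yields $\int_{\Omega_-}f(u-w)\,dx\ge 0$, and since $f<0$ on $\Omega_-$ and $u\ge w$ this forces $u=w$ on $\Omega_-$. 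No local bump and no ``strictly increases'' claim are needed --- with the global competitor you only get $\mathcal{F}(w)\ge\mathcal{F}(u)$, and it is the resulting \emph{equality} that does the work. Note also that the connectedness hypothesis \eqref{f2} is not used in this proposition at all (it enters only later, for \eqref{nabla}), so your attribution of its role here is misplaced. Your own remark that the competitor is ``cleanest by expressing [it] as an inf or sup of $1$-Lipschitz functions'' is exactly the right instinct --- just take that envelope globally from the start rather than routing through a perturbation argument.
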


  \begin{proof}
We prove \eqref{u2}, as \eqref{u1} is similar (see Proposition 6.1 in  \cite{HL}).
Let  $$w(x)= \sup\{   u(y)-|x-y|, y\in \Omega_+ \cup \partial \Omega\}\vee\varphi^*(x).$$
As $u\in Lip_1(\bar\Omega)$ and $u\geq \varphi$
we deduce $u\geq w.$  Moreover $w\in Lip_1(\bar\Omega),$  $u=w$ on $\Omega_+\cup \partial \Omega$ and $w\in \mathcal{K}^\infty.$ 
Then as $$\int _{\Omega_+}f w dx +\int _{\Omega_-}f w dx =\mathcal{ F}(w)\leq \mathcal{F}(u)=\int _{\Omega_+}f u dx +\int _{\Omega_-}f u dx $$
we obtain $$\int _{\Omega_-}f (u-w) dx\geq 0$$ and so $u=w$ on $\Omega_-.$

  \end{proof}

  By proceeding as in the proof Propositions 6.4, 6.5, 6.6 and 6.7 of   \cite{HL}
   we obtain la following result.

 \begin{prop}
 For any $u,v\in \mathcal{M}_\varphi$ we  have
  \begin{equation} \label {pm} \sup_{\Omega_+} (u-v)^+  = \sup_{\Omega_-} (u-v)^+  \end{equation}
  and  \begin{equation} \label {nabla} \nabla u=\nabla v \qquad \text{a. e. in   } \Omega_+\end{equation}
\end{prop}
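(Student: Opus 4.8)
The plan is to prove the two assertions separately, both by exploiting the representation formulas \eqref{u1} and \eqref{u2} together with the convexity of the maximizer set $\mathcal{M}_\varphi$, following the scheme of Propositions 6.4--6.7 in \cite{HL}.

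For the identity \eqref{pm}, I would set $M=\sup_{\Omega_+}(u-v)^+$ and $N=\sup_{\Omega_-}(u-v)^+$, both nonnegative since $\Omega_\pm\neq\emptyset$. To obtain $M\le N$ I fix $x\in\Omega_+$ and use the inf-representation \eqref{u1} for $v$: for every $\varepsilon>0$ there is $y\in\Omega_-\cup\partial\Omega$ with $v(y)+|x-y|\le v(x)+\varepsilon$, while \eqref{u1} for $u$ gives $u(x)\le u(y)+|x-y|$; subtracting yields $u(x)-v(x)\le u(y)-v(y)+\varepsilon$. Since $u=v=0$ on $\partial\Omega$ and $u(y)-v(y)\le N$ when $y\in\Omega_-$, letting $\varepsilon\to0$ gives $u(x)-v(x)\le N$, hence $M\le N$. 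For the reverse inequality I fix $x\in\Omega_-$ and use the sup-representation \eqref{u2} for $u$: if $u(x)=\varphi^*(x)$, then $v\ge\varphi$ and $v\in Lip_1(\bar\Omega)$ force $v(x)\ge\varphi^*(x)=u(x)$, so $(u-v)^+(x)=0$; otherwise the supremum is realized up to $\varepsilon$ at some $y\in\Omega_+\cup\partial\Omega$, and pairing $u(x)\le u(y)-|x-y|+\varepsilon$ with $v(x)\ge v(y)-|x-y|$ gives $u(x)-v(x)\le u(y)-v(y)+\varepsilon\le M+\varepsilon$. Letting $\varepsilon\to0$ yields $N\le M$, whence \eqref{pm}.

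For the gradient identity \eqref{nabla} the key observation is that $\mathcal{M}_\varphi$ is convex: since $\mathcal{F}$ is linear and $\mathcal{K}^\infty$ is convex, the midpoint $w=\tfrac12(u+v)$ again lies in $\mathcal{M}_\varphi$. By \eqref{u1}, each of $u$, $v$, $w$ coincides on all of $\Omega$ with the inf-convolution of its own trace on $\Omega_-\cup\partial\Omega$ against the cone $|x-y|$. Because $\Omega_+$ is open and disjoint from the closed set $\overline{\Omega_-}\cup\partial\Omega$, every $x\in\Omega_+$ lies at positive distance from that set, so the infimum is attained at some $z\neq x$ and $w$ is affine with slope $1$ along the segment $[z,x]$; at a point of differentiability this forces $\nabla w(x)\cdot\frac{x-z}{|x-z|}=1$, and since $|\nabla w|\le1$ we conclude $|\nabla w(x)|=1$ for a.e. $x\in\Omega_+$ by Rademacher's theorem. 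Finally, at such a point $\nabla w=\tfrac12(\nabla u+\nabla v)$ with $|\nabla u|,|\nabla v|\le1$ and $|\nabla w|=1$; the strict convexity of the Euclidean ball (equality in $|\nabla u+\nabla v|\le|\nabla u|+|\nabla v|\le 2$) forces $\nabla u(x)=\nabla v(x)$, which is \eqref{nabla}.

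The routine part is \eqref{pm}, a direct manipulation of the two extension formulas. The delicate point, and the step I expect to be the main obstacle, is the rigidity $|\nabla w|=1$ a.e. on $\Omega_+$: one must verify that the obstacle plays no role there (guaranteed because \eqref{u1} contains no obstacle term and, on $\Omega_+$, the sign $f>0$ keeps a maximizer detached from $\varphi$) and that the minimizing point $z$ in the inf-convolution stays at positive distance from $x$, so that the affine slope-one behavior along $[z,x]$ is genuine. Granting this, the parallelogram/strict-convexity argument closes the proof.
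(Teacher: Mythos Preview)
Your proposal is correct and follows precisely the route indicated by the paper, which defers entirely to Propositions 6.4--6.7 of \cite{HL}; the inf/sup representation argument for \eqref{pm} and the midpoint-plus-strict-convexity argument for \eqref{nabla} are exactly the Ishii--Loreti scheme. Your treatment of the one new ingredient---the obstacle term $\varphi^*$ in \eqref{u2}, dispatched via the observation that every $v\in\mathcal{K}^\infty$ satisfies $v\ge\varphi^*$---is the correct adaptation, and your remark that the disjoint open sets $\Omega_+,\Omega_-$ force $\Omega_+\cap\overline{\Omega_-}=\emptyset$ (so the minimizing $z$ in \eqref{u1} is genuinely separated from $x$) fills in the only point that required care.
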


 Now we prove Theorem \ref{tf}.

  \begin{proof}
  First we show, that for any functions $u, v\in \mathcal{M}_\varphi$   \begin{equation} \label {sp} u=v\, \text{on  }supp f. \end{equation}

  By contradiction we suppose  $\sup_{\Omega_+} (u-v)^+  =h>0$ then by \eqref{nabla} we obtain that $u(x)=v(x)+h$ for any $x\in  \Omega_+.$
  
  By \eqref{f3}, we deduce that for any $\varepsilon >0$, there exists a point $x_\varepsilon$ in $\Omega_+$ such that 
  $$d(x_\varepsilon )+d(y)-|x_\varepsilon -y| \leq \varepsilon$$ for any $y\in \Omega_-.$ 
  By using that $u, v\in Lip_1(\bar\Omega)$  vanish on the boundary $\partial \Omega $ and property \eqref{u1}, we deduce 
  $$ u(x_\varepsilon)\leq d(x_\varepsilon) \leq \varepsilon +v(x_\varepsilon)$$ and this is a contradiction if $\varepsilon\in (0, h).$
  Then $u(x)=v(x)$ for any $x\in  \Omega_+.$ 
  By  \eqref{pm} we deduce that $u(x)\leq v(x)$ for any $x\in  \Omega_-.$  By changing the role of  $u$ and  $v$ in \eqref{pm} 
  we obtain $v(x)\leq u(x)$ for any $x\in  \Omega_-$  and this completes the proof of \eqref{sp}.
  
  Now,  according to \cite{RTU},  for any $u\in \mathcal{A}_\varphi$  we denote by  $\Gamma_u=\{x\in \Omega\setminus supp f: u(x)=\varphi(x)\}$
 then 
 \begin{equation} \label {pmu} \begin {cases}
 u(x)\geq \varphi(x), &\quad \text{in} \quad\Omega\setminus supp f\\
 -\triangle_\infty u=0 &\quad \text{in} \quad\Omega\setminus (supp f\bigcup \Gamma_u) \quad\text{in the viscosity sense}\\
 -\triangle_\infty u\geq0 &\quad\text{in}\quad \Omega\setminus supp f \quad\text{in the viscosity sense}. 
  \end {cases}\end {equation}

Now we denote by 
$$w(x)= \inf \{ v(x): v\in \mathcal{G}\}$$ where  $\mathcal{G}$ denotes the set of the continuous functions that are infinity super-harmonic in $ \Omega\setminus supp f$ and satisfy the conditions
   $v(x)\geq \varphi(x), \quad \text{in} \quad\Omega\setminus supp f $ and $v=u$ on  $\partial (\Omega\setminus supp f ).$
We note that $u\in \mathcal{G}$  and $w$ is upper semicontinuous  and infinity super-harmonic in $ \Omega\setminus supp f.$  Moreover  $u\geq w.$
  
  We consider the open set $$W=\{x\in  \Omega\setminus supp f: u(x)>w(x)\}.$$We have  $u(x)=w(x)$ on $ \partial W$ and $u(x)>w(x)\geq \varphi$ in $W$ so  $W\subset   \Omega\setminus (supp f\bigcup \Gamma_u)$ then $u$ is infinity harmonic in $W.$ By the comparison principle  (see for instance  \cite{J}) we conclude that $u\leq w$ in  $W.$ 
 Hence $W=\emptyset$ and $u=w$ in   $\Omega\setminus supp f .$
  
  Moreover  any element $v\in \mathcal{A}_\varphi$  belongs to   $\mathcal{G}$  as $u=v=0$ on $\partial \Omega$ and by  (\ref{sp}) we have  $u=v$ on $supp f,$  hence $ u\leq v.$  By the same argument we can show that   $ v\leq u $  then $u= v$ on $ \Omega\setminus supp f.$ This completes the proof.
 \end{proof}

We now discuss the situation  in which  the datum $f$ does not change sign in $\Omega.$ 
We note that  $  f(x)\geq \delta _0>0$ then $\mathcal{A}_\varphi=\{d(x) \} $  and  in particular  the set $\mathcal{A}_\varphi $ is a singleton.  In fact we consider the  Dirichlet problem:
\begin{equation} \label{Dpn}
\text{find } u\in W_0^{1,p}(\Omega),\, \,\,\,\int_{\Omega} |\nabla u|^{p-2}\nabla u\nabla v\,dx\, -\int_{\Omega} fv\,dx\,= 0\,\,\,
  \forall v\in W_0^{1,p}(\Omega).
  \end{equation}
 If we assume that $ f\in L^\infty(\Omega) $ then, for any fixed $p,$ there exist an unique solution $u_{p,D}$ of Problem (\ref{Dpn}).
 We denote  
 $$\mathcal{M}=\{u\in  W_0^{1,\infty}(\Omega)\cap Lip_1(\bar{\Omega}): \mathcal{F}(u)=\max_{w\in  W_0^{1,\infty}(\Omega)\cap Lip_1(\bar{\Omega})} \mathcal{F}(w)\}$$
and $$\mathcal{A}=\{  u\in C(\bar{\Omega}): \text  {there exists a sequence} \,\, p_j\to \infty \quad \text{such that}\,\,  u_{p_j,D} \to u \quad \text{in} \quad C(\bar{\Omega}) \}$$
 where $u_{p,D}$ denotes the  solution to (\ref{Dpn}). 
   If $  f(x)\geq \delta _0>0$ then  there exists the limit of the functions $u_{p,D}$ in $C(\bar{\Omega})$  and we have $\lim_{p\to\infty}u_{p,D}(x)= d(x)$  where $d(x)$ denotes the distance of $x$ from the boundary (see  Proposition 5.2 in \cite{BDM} and \cite{HL}). 
   
  By Lewy-Stampacchia inequality  (\ref{LS})    the solutions $u_p$ of Problem (\ref{OPp})  solve Dirichlet problems with data $f_p\geq \delta _0>0.$ Then  by arguing as in the proof of 
  Proposition 5.2 in \cite{BDM} we deduce that 
     $\mathcal{A}_\varphi=\mathcal{A}=\{d(x) \}$  and, in particular, the set $\mathcal{A}_\varphi $ is a singleton.

The following theorem concerns the case $f(x)\geq 0.$
\begin{tm}
\label{tf3}Suppose that  \eqref{f},  \eqref{triangle}  and \eqref{knonv}  hold, and 
\begin {equation} \label {f113} f\geq 0. \end{equation}
 Then the set $\mathcal{A}_\varphi $ is a singleton.
\end{tm}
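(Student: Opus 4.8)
The plan is to show that $\mathcal{A}_\varphi$ is a singleton by proving that any two accumulation points $u,v\in\mathcal{A}_\varphi$ must coincide. As already noted, $\mathcal{A}_\varphi\subset\mathcal{M}_\varphi$, so both $u$ and $v$ are maximizers of $\mathcal{F}$ over $\mathcal{K}^\infty$ and hence lie in $Lip_1(\bar\Omega)$. The key structural advantage here, compared with the sign-changing case of Theorem \ref{tf}, is that $f\geq 0$ together with the Lewy--Stampacchia inequality \eqref{LS} forces $-\triangle_p u_p\geq f\geq 0$, so each $u_p$ is $p$-superharmonic on all of $\Omega$. First I would pass this to the limit: each $u\in\mathcal{A}_\varphi$ should be $\infty$-superharmonic in $\Omega$ in the viscosity sense, and moreover should satisfy the obstacle system analogous to \eqref{pmu}, namely $u\geq\varphi$ in $\Omega$, $-\triangle_\infty u\geq 0$ in $\Omega$ (viscosity sense), and $-\triangle_\infty u=0$ on the non-coincidence set $\Omega\setminus\Gamma_u$, where $\Gamma_u=\{x\in\Omega:u(x)=\varphi(x)\}$.

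Next I would exploit the variational identification of $u_\infty$ as the smallest continuous $\infty$-superharmonic function lying above the obstacle, exactly as sketched in the introduction. Concretely, following the comparison-based argument at the end of the proof of Theorem \ref{tf}, I would set
\begin{equation}\label{wdef3}
w(x)=\inf\{v(x):v\in\mathcal{G}\},
\end{equation}
where now $\mathcal{G}$ is the set of continuous functions that are $\infty$-superharmonic in $\Omega$, satisfy $v\geq\varphi$ in $\Omega$, and agree with $u$ on $\partial\Omega$ (here $u=0$ on $\partial\Omega$). Since $u\in\mathcal{G}$ we have $u\geq w$. On the open set $W=\{x\in\Omega:u(x)>w(x)\}$ we have $u>w\geq\varphi$, so $W\subset\Omega\setminus\Gamma_u$, where $u$ is $\infty$-harmonic; since $u=w$ on $\partial W$, the comparison principle for the $\infty$-Laplacian (see \cite{J}) gives $u\leq w$ on $W$, forcing $W=\emptyset$ and hence $u=w$. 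The same argument applied to $v$ yields $v=w$, and therefore $u=v$.

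The main simplification relative to Theorem \ref{tf} is that no boundary-geometry condition like \eqref{f3} is needed: because $f\geq 0$ does not change sign, there is no ``$\Omega_-$'' competitor to balance, so the obstacle-side comparison alone determines the limit uniquely via the minimal $\infty$-superharmonic envelope $w$ in \eqref{wdef3}. The essential point is that the same $w$, defined intrinsically from the obstacle $\varphi$ and the zero boundary data, is produced by \emph{every} element of $\mathcal{A}_\varphi$, so the accumulation set cannot contain two distinct functions.

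I expect the main obstacle to be the first step: rigorously passing from the $p$-superharmonicity of $u_p$ (with the detachment-set equation $-\triangle_p u_p=f$ on $\Omega\setminus\Gamma_p$) to the limiting viscosity relations \eqref{pmu} for $u\in\mathcal{A}_\varphi$, uniformly with respect to the $p$-dependence of the coincidence sets $\Gamma_p$. This requires the standard stability of viscosity solutions under the uniform convergence $u_{p_j}\to u$ in $C(\bar\Omega)$ guaranteed by \eqref{knonv}, together with the bound \eqref{limsup} to control the gradients; the delicate part is ensuring that on the limiting non-coincidence set the equation $-\triangle_\infty u=0$ genuinely holds, which is where the $\infty$-Laplacian obstacle-problem theory of \cite{RTU} and the comparison principle of \cite{J} do the heavy lifting.
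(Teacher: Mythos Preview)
Your argument has a genuine gap at the point where you claim that $-\triangle_\infty u=0$ on all of $\Omega\setminus\Gamma_u$. This is only true on $(\Omega\setminus\Gamma_u)\cap(\Omega\setminus\operatorname{supp}f)$. On the set $(\Omega\setminus\Gamma_u)\cap\operatorname{supp}f$ the approximating functions satisfy $-\triangle_p u_p=f$ with $f>0$, and the limit is governed by the eikonal constraint $|\nabla u|=1$ rather than by $-\triangle_\infty u=0$; in particular $u$ is \emph{not} $\infty$-subharmonic there in general. Consequently the comparison step ``$u$ is $\infty$-harmonic on $W$, $w$ is $\infty$-superharmonic, $u=w$ on $\partial W$, hence $u\le w$ on $W$'' breaks down whenever $W$ meets $\operatorname{supp}f$. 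A concrete counterexample is Example~4 of the paper: with $\Omega=(-2,2)$, $\varphi=1-x^{2}$ and $f=\chi_{\{3/2<|x|<2\}}$, the limit $U$ is given by \eqref{DVinftyE4}, whereas the minimal concave (hence $\infty$-superharmonic) function $w$ above $\varphi$ with $w(\pm2)=0$ is the tangent-line envelope touching $1-x^{2}$ at $|x|=2-\sqrt{3}$. One checks $w(3/2)=2-\sqrt{3}<\tfrac12=U(3/2)$, so your $w$ is strictly smaller than the actual limit; and $U$ has a concave corner at $|x|=3/2\in W$, so it is not $\infty$-subharmonic there and Jensen's comparison cannot be invoked.

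The paper's proof supplies exactly the missing ingredient you tried to bypass: it first shows, using the Lewy--Stampacchia inequality \eqref{LS} (which gives $-\triangle_p u_p\ge f\ge0$) together with the argument of \cite[Proposition~5.2]{BDM}, that every $u\in\mathcal{A}_\varphi$ satisfies $u=d(x)$ on $\operatorname{supp}f$. This fixes $u=v$ on $\operatorname{supp}f$ for any two accumulation points. Only then is the $\mathcal{G}$-envelope comparison run, but on the smaller domain $\Omega\setminus\operatorname{supp}f$, with the boundary condition $v=u$ prescribed on $\partial(\Omega\setminus\operatorname{supp}f)$. On this domain $f\equiv0$, so the $\infty$-harmonicity on the non-coincidence set is legitimate and the comparison goes through. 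Your envelope $w$, taken over all of $\Omega$ with only the zero data on $\partial\Omega$, is the wrong object: it does not see the constraint $u=d$ on $\operatorname{supp}f$ and is therefore too small.
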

\begin{proof}
 For any functions $u, v\in \mathcal{A}_\varphi,$  using the Lewy-Stampacchia inequality  (\ref{LS})  and repeating the previous argument we  show that   
   \begin{equation} \label {sp3} u=v= d(x) \, \text{on} \,\,supp f   \end{equation} 
  (see  Proposition 5.2 in \cite{BDM} and \cite{HL}).

  Now we proceed as in the proof of Theorem \ref{tf}  (see also \cite{RTU})  to  conclude the proof.
   \end{proof}

 By the same arguments we deal with  $f$ negative more precisely the following result holds true.

\begin{tm}
\label{tf2} Suppose that assumptions \eqref{f},  \eqref{triangle} and \eqref{knonv}  hold, and 
\begin {equation} \label {f11}   f(x)\leq -\delta _0<0, \end{equation}
  then the set $\mathcal{A}_\varphi $ is a singleton.
\end{tm}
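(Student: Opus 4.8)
The plan is to reduce Theorem \ref{tf2} to the situation already handled in Theorem \ref{tf3} by exploiting the Lewy-Stampacchia inequality \eqref{LS}. The key observation is that, since the constraint in $\mathcal{K}$ is from below and $f\leq -\delta_0<0$, the right-hand side of \eqref{LS} forces $-\triangle_p u_p = -\triangle_p\varphi \vee f$ on the set where $u_p$ detaches from the obstacle; more precisely, the lower bound $f\leq -\triangle_p u_p$ combined with $f<0$ means the \emph{active} datum driving the equation is governed by $-\triangle_p\varphi$ rather than by $f$ alone. This is precisely the reason the introduction flags (see \eqref{f116}) that for non-positive $f$ one must impose extra conditions on $-\triangle_p\varphi$. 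I would therefore first state explicitly the structural consequence of \eqref{LS} in this regime, isolating the effective right-hand side $g_p := -\triangle_p u_p$, which by \eqref{LS} satisfies $f\leq g_p\leq -\triangle_p\varphi\vee f$.

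Next I would follow the template of the proof of Theorem \ref{tf3} verbatim in its logical skeleton. For any pair $u,v\in\mathcal{A}_\varphi$, the first step is to establish that $u=v$ on $\operatorname{supp} f$ (the analogue of \eqref{sp3}). Here, instead of the distance function $d(x)$ arising from $f\geq\delta_0>0$, one uses that where $u_p$ is detached from $\varphi$ the solution is $\infty$-superharmonic in the viscosity sense and satisfies a uniform one-sided bound coming from \eqref{LS}; the negativity $f\leq-\delta_0<0$ plays the symmetric role that positivity played in Theorem \ref{tf3}, now acting through the upper envelope $-\triangle_p\varphi\vee f$. I would invoke the same comparison-principle machinery referenced there (Proposition 5.2 in \cite{BDM}, the viscosity-solution framework of \cite{HL} and \cite{RTU}) to pin down the common boundary-type values on $\operatorname{supp} f$.

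The concluding step is identical to the final paragraph of the proof of Theorem \ref{tf}: once $u=v$ on $\operatorname{supp} f$, one introduces the infimal envelope $w(x)=\inf\{v(x):v\in\mathcal{G}\}$ over continuous $\infty$-superharmonic functions in $\Omega\setminus\operatorname{supp} f$ that dominate $\varphi$ and agree with $u$ on $\partial(\Omega\setminus\operatorname{supp} f)$. The characterization \eqref{pmu}, which holds for every element of $\mathcal{A}_\varphi$, guarantees that each $u\in\mathcal{A}_\varphi$ lies in $\mathcal{G}$; the open set $W=\{u>w\}$ is then shown to be empty by the comparison principle for $\infty$-harmonic functions (\cite{J}), giving $u=w$ throughout $\Omega\setminus\operatorname{supp} f$. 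Since $w$ is manifestly independent of the particular element of $\mathcal{A}_\varphi$, any two members coincide, so $\mathcal{A}_\varphi$ is a singleton.

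The main obstacle I anticipate is the first step, namely establishing $u=v$ on $\operatorname{supp} f$ in the negative regime. For positive $f$ this came for free from the known limit $\lim_p u_{p,D}=d(x)$ of the associated Dirichlet problems, where the positive datum produces the distance function unambiguously. For $f\leq-\delta_0<0$ the effective right-hand side is no longer the clean $f$ but the envelope $-\triangle_p\varphi\vee f$, so the limiting Dirichlet problem is driven by a $p$-dependent datum whose behavior is controlled only through the assumption on $-\triangle_p\varphi$ hinted at in \eqref{f116}. Making the uniform convergence on $\operatorname{supp} f$ rigorous therefore requires carefully tracking how this upper envelope passes to the limit, and this is where the argument must deviate genuinely from the symmetric positive case rather than merely mirroring it.
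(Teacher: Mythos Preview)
Your proposal has a genuine gap, and the template you have chosen is the wrong one.

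First, observe that under \eqref{f11} one has $f<0$ a.e.\ in $\Omega$, so $\operatorname{supp} f=\bar\Omega$ and $\Omega\setminus\operatorname{supp} f=\emptyset$. The entire second half of your plan --- the infimal envelope $w$ over the class $\mathcal{G}$, the set $W=\{u>w\}$, the comparison principle for $\infty$-harmonic functions --- is vacuous here. There is nothing to do off $\operatorname{supp} f$.

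Second, the step you flag as the ``main obstacle'' is not an obstacle that can be overcome along the lines you suggest; it is the reason your route fails. The Lewy--Stampacchia inequality \eqref{LS} yields only $-\triangle_p u_p\le -\triangle_p\varphi\vee f$, and without the additional hypothesis $-\triangle_p\varphi\le C_0<0$ there is no way to force $-\triangle_p u_p\le -\delta'<0$ and hence no way to invoke Proposition~5.2 of \cite{BDM} in the symmetric form. That extra hypothesis is precisely \eqref{f115}, the assumption of Theorem~\ref{tf4}, and it is \emph{not} assumed in Theorem~\ref{tf2}. (Your reference to \eqref{f116} is misplaced: that is the assumption $-\triangle_p\varphi\ge 0$ of Theorem~\ref{tf5}, which goes in the opposite direction and is of no use here.)

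The paper's ``by the same arguments'' points to a much shorter route, which bypasses the Lewy--Stampacchia inequality entirely. Since $f<0$ everywhere, $\Omega_+=\emptyset$, and the representation \eqref{u2} (whose proof goes through verbatim when $\Omega_+=\emptyset$) gives, for every $u\in\mathcal{M}_\varphi$,
\[
u(x)=\sup\{u(y)-|x-y|:y\in\partial\Omega\}\vee\varphi^*(x)=(-d(x))\vee\varphi^*(x),
\]
which does not depend on $u$. Hence $\mathcal{M}_\varphi$ is a singleton, and since $\mathcal{A}_\varphi\subset\mathcal{M}_\varphi$ is nonempty by precompactness, $\mathcal{A}_\varphi$ is a singleton. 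Equivalently: because $f<0$ a.e., any maximizer of $\mathcal{F}$ over $\mathcal{K}^\infty$ must be the pointwise minimal element of $\mathcal{K}^\infty$, which is unique. This is the correct analogue of the strictly positive case discussed just before Theorem~\ref{tf3}, where the unique maximizer is the pointwise maximal element $d(x)$; the right parallel for Theorem~\ref{tf2} is that paragraph, not the proof of Theorem~\ref{tf3}.
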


The situation in which   the datum $f$ is not positive, is more delicate. In the following theorems different conditions on the obstacle are assumed. In Section 5 we see an example of obstacle problem where  the assumptions of  Theorem \ref{tf5} are satisfied (Example 2).

\begin{tm}
\label{tf4}Suppose that assumptions \eqref{f},  \eqref{triangle} and \eqref{knonv}  hold, and 
\begin {equation} \label {f114} f\leq 0,  \end{equation} 
and
\begin {equation} \label {f115}\ \, -\triangle_p\varphi \leq C_0<0,\,\,\forall p,
  \end{equation}
 then the set $\mathcal{A}_\varphi $ is a singleton.
\end{tm}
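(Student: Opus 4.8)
The plan is to follow the two-step scheme already used for Theorems \ref{tf} and \ref{tf3}, adapting it to the sign condition \eqref{f114} together with the strict subsolution property \eqref{f115} of the obstacle. Recall that $\mathcal{A}_\varphi\subset\mathcal{M}_\varphi$, so it suffices to show that any two functions $u,v\in\mathcal{A}_\varphi$ coincide; I would split $\Omega$ into $\text{supp}\, f$ and its complement and treat the two regions separately. The first step is to prove that $u=v$ on $\text{supp}\, f$, and the second is to propagate this equality to $\Omega\setminus\text{supp}\, f$ by the comparison principle for $\infty$-harmonic functions.

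For the first step I would argue that, on $\text{supp}\, f$, both limits coincide with the obstacle, i.e. $u=v=\varphi$. The key is the Lewy--Stampacchia inequality \eqref{LS}: since $f\le 0$ by \eqref{f114} and $-\triangle_p\varphi\le C_0<0$ by \eqref{f115}, one gets $f\le-\triangle_p u_p\le(-\triangle_p\varphi)\vee f\le 0$, so each $u_p$ is $p$-subharmonic; with zero boundary data the maximum principle yields $\varphi\le u_p\le 0$. The strict subsolution property \eqref{f115} is then used to force the coincidence set to invade $\text{supp}\, f$: where $f<0$ the obstacle is a strict $p$-subsolution from which the solution cannot separate, so $u_p\to\varphi$ uniformly there; passing to the limit along the two sequences defining $u$ and $v$ gives $u=v=\varphi$ on $\{f<0\}$, hence on $\text{supp}\, f$ by continuity.

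For the second step I would mimic the proof of Theorem \ref{tf}. Each $u\in\mathcal{A}_\varphi$ is a viscosity solution of the obstacle problem for the $\infty$-Laplacian and satisfies the analogue of \eqref{pmu} on $\Omega\setminus\text{supp}\, f$: it is $\infty$-superharmonic there, is $\infty$-harmonic off its own coincidence set $\Gamma_u$, and lies above $\varphi$. Writing $w$ for the smallest continuous $\infty$-superharmonic function above $\varphi$ that agrees with $u$ on $\partial(\Omega\setminus\text{supp}\, f)$, the comparison principle \cite{J}, applied on the open set where $u>w$ (which is contained in $\Omega\setminus(\text{supp}\, f\cup\Gamma_u)$), gives $u=w$. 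Since by the first step $u=v$ on $\text{supp}\, f$ and $u=v=0$ on $\partial\Omega$, both $u$ and $v$ realise the same $w$, whence $u=v$ on $\Omega\setminus\text{supp}\, f$. Together with the first step this shows that $\mathcal{A}_\varphi$ is a singleton.

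The main obstacle is the first step: controlling the coincidence set on $\text{supp}\, f$ when $f$ is merely $\le 0$ and may vanish on a set of positive measure. This is exactly where hypothesis \eqref{f115} enters, and the delicate point is to make the strict $p$-subharmonicity of $\varphi$ survive the limit $p\to\infty$ (as a strict $\infty$-subsolution property) in order to rule out that the noncoincidence set $\{u_p>\varphi\}$ persists on $\{f<0\}$; without \eqref{f115} the identification of $u$ on $\text{supp}\, f$ may fail, which is why no analogous condition is needed in the strictly negative case of Theorem \ref{tf2}.
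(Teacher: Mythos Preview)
Your second step---propagating equality from $\operatorname{supp} f\cup\partial\Omega$ to $\Omega\setminus\operatorname{supp} f$ via the comparison principle for $\infty$-superharmonic functions---matches the paper exactly. The gap is in the first step.

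You claim $u=v=\varphi$ on $\operatorname{supp} f$, arguing that under \eqref{f115} ``the obstacle is a strict $p$-subsolution from which the solution cannot separate''. This is backwards. The inequality $-\Delta_p\varphi\le C_0<0$ says $\varphi$ is strictly $p$-subharmonic, and a strict subsolution is precisely something the solution lifts \emph{off}, not sticks to. Concretely, wherever $f(x)>C_0$ one has $-\Delta_p\varphi\le C_0<f$, so $\varphi$ is a strict subsolution of $-\Delta_p u=f$ there and the Lewy--Stampacchia inequality collapses to $-\Delta_p u_p=f$; in particular the interior of the coincidence set avoids the whole region $\{C_0<f\le 0\}$, which may well cover a substantial part of $\operatorname{supp} f$. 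Thus the identification $u_\infty=\varphi$ on $\operatorname{supp} f$ fails in general.

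The paper uses \eqref{f115} for the opposite purpose. Combined with \eqref{LS} it gives the global sign $-\Delta_p u_p\le(-\Delta_p\varphi)\vee f\le 0$, so each $u_p$ solves a Dirichlet problem with nonpositive right-hand side; the argument of Proposition~5.2 in \cite{BDM} (see also \cite{HL}) then yields $u=v=-d(x)$ on $\operatorname{supp} f$, the negative of the distance to $\partial\Omega$, not $\varphi$. After that the proof concludes exactly as in your second step. You may be conflating \eqref{f115} with hypothesis \eqref{f116} of Theorem~\ref{tf5}: there $-\Delta_p\varphi\ge 0$ \emph{does} make the obstacle $p$-superharmonic and a coincidence-set argument is indeed used, but \eqref{f115} has the opposite sign.
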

\begin{proof}
 
  For any functions $u, v\in \mathcal{A}_\varphi$ , \,by  (\ref{LS}) and   (\ref{f115}) we have   
   \begin{equation} \label {sp4} u=v= -d(x) \, \text{in } supp f   \end{equation} 
  (see  Proposition 5.2 in \cite{BDM} and \cite{HL}). 
   Now we proceed as in the proof of Theorem \ref{tf} to  conclude the proof.
   \end{proof}

  \begin{tm}
\label{tf5}Suppose that assumptions \eqref{f},  \eqref{triangle}, \eqref{knonv},  \eqref{Lambda} and  \eqref{f114} hold.  
 Furthermore  we assume that the set $ \Omega_-=\{  x\in \Omega, f(x)<0 \}$ is open  and 
\begin {equation} \label {f116}\ \, -\triangle_p\varphi \geq 0,\,\,\forall p,
  \end{equation}
 then the set $\mathcal{A}_\varphi $ is a singleton.
\end{tm}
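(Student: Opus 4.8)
The plan is to show that any two functions $u,v\in\mathcal{A}_\varphi$ coincide, by first proving $u=v$ on $supp f=\overline{\Omega_-}$ and then propagating the equality to $\Omega\setminus supp f$ through the comparison principle for the obstacle problem of the $\infty$-Laplacian, exactly as in the last part of the proof of Theorem \ref{tf}. Since $\mathcal{A}_\varphi\subset\mathcal{M}_\varphi$, both $u$ and $v$ are maximizers of $\mathcal{F}$ over $\mathcal{K}^\infty$, hence belong to $Lip_1(\bar\Omega)$, vanish on $\partial\Omega$ and satisfy $u,v\geq\varphi$. I would begin by recording the form the Lewy--Stampacchia inequality \eqref{LS} takes under \eqref{f114} and \eqref{f116}: since $f\leq 0\leq-\triangle_p\varphi$ a.e., one has $-\triangle_p\varphi\vee f=-\triangle_p\varphi$ and therefore $f\leq-\triangle_p u_p\leq-\triangle_p\varphi$ for every $p$. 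This two-sided control is what replaces the strict sign assumption \eqref{f115} of Theorem \ref{tf4}.

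For the agreement on $supp f$ I would argue as in the proposition giving \eqref{u2}, now in the degenerate case $\Omega_+=\emptyset$. On $\Omega_-$ the functional $\mathcal{F}$ is maximized by taking $u$ as small as admissible, so I would introduce the competitor $w(x)=\bigl(-d(x)\bigr)\vee\varphi^*(x)$, with $\varphi^*(x)=\sup\{\varphi(y)-|x-y|:y\in\Omega\}$. Then $w\in\mathcal{K}^\infty$ (here \eqref{knonv} forces $\varphi^*\leq 0$ on $\partial\Omega$, so $w=0$ there) and $w\leq u$, because $u\geq-d$ and $u\geq\varphi^*$ follow from $u\in Lip_1(\bar\Omega)$, $u=0$ on $\partial\Omega$ and $u\geq\varphi$. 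Testing the maximality of $u$ against $w$ and using that $f<0$ while $u-w\geq 0$ on the open set $\Omega_-$ forces $\int_{\Omega_-}f(u-w)=0$, whence $u=w$ a.e. and, by continuity, everywhere on $\Omega_-$ and on $\overline{\Omega_-}=supp f$. Since $w=(-d)\vee\varphi^*$ does not depend on the chosen maximizer, this gives $u=v$ on $supp f$.

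It then remains to pass to $\Omega\setminus supp f$, where $f=0$. Here I would invoke, for each $u\in\mathcal{A}_\varphi$, the viscosity relations \eqref{pmu}: $u\geq\varphi$, $-\triangle_\infty u=0$ in $\Omega\setminus(supp f\cup\Gamma_u)$ and $-\triangle_\infty u\geq0$ in $\Omega\setminus supp f$, where $\Gamma_u=\{x\in\Omega\setminus supp f:u=\varphi\}$. This is precisely the point where \eqref{f116} and \eqref{Lambda} are used: \eqref{f116} guarantees that the fixed obstacle $\varphi$ is $\infty$-superharmonic, so that $u=\varphi$ on $\Gamma_u$ is compatible with $-\triangle_\infty u\geq0$, while \eqref{Lambda} controls the limit of the moving coincidence sets $\Gamma_p$ and lets one identify the region on which the limit is $\infty$-harmonic, replacing the smoothness hypotheses of \cite{RTU} and \cite{BDR}. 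With \eqref{pmu} available, the argument of Theorem \ref{tf} applies verbatim: one sets $w=\inf\{v:v\in\mathcal{G}\}$, the smallest continuous $\infty$-superharmonic function above $\varphi$ with $w=u$ on $\partial(\Omega\setminus supp f)$, shows $u=w$ (the open set $\{u>w\}$ lies in the non-coincidence region where $u$ is $\infty$-harmonic, so the comparison principle \cite{J} forces it to be empty), and observes that $v$ is an admissible competitor since $u=v=0$ on $\partial\Omega$ and $u=v$ on $supp f$; the symmetric inequality then yields $u=v$ on $\Omega\setminus supp f$, hence on all of $\Omega$.

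The main obstacle I expect is the justification of the viscosity relations \eqref{pmu} in this low-regularity setting, namely the verification that the limit $u$ is $\infty$-harmonic exactly off $supp f\cup\Gamma_u$. This is delicate because $\Gamma_p$ moves with $p$ and the obstacle is genuinely active on part of $supp f$, and it is here that \eqref{Lambda} does the essential work supplied by free-boundary regularity in \cite{RTU} and \cite{BDR}. By contrast, the agreement on $supp f$ is comparatively soft, relying only on the maximization structure and on $\Omega_-$ being open.
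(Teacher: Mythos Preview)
Your argument is correct and follows a genuinely different route from the paper. The paper does \emph{not} obtain $u=v$ on all of $supp\,f$: via Proposition~5.2 in \cite{BDM} it shows only $u=v=-d(x)$ on $supp\,f\setminus int(\Gamma_\infty^*)$ (one needs the obstacle to be inactive for large $p$, which is guaranteed only away from $\Gamma_\infty^*$), and then works on the larger set $\Omega^*=\Omega\setminus\bigl(supp\,f\setminus int(\Gamma_\infty^*)\bigr)$, which still contains $int(\Gamma_\infty^*)\cap\Omega_-$ where $f<0$. It is in establishing the viscosity relations on $\Omega^*$ that the paper genuinely invokes \eqref{Lambda} and \eqref{f116}: \eqref{Lambda} is used to force points of $\Omega^*\setminus\Gamma_u$ into $\{f=0\}$, and on $\Omega^*\cap\Omega_-$ one has $u_p\equiv\varphi$ for large $p$ (again by \eqref{Lambda}), so that \eqref{f116} supplies $-\triangle_p u_p=-\triangle_p\varphi\geq 0$. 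Your variational Step~1 is both more elementary and stronger: since $w=(-d)\vee\varphi^*$ is the minimum of $\mathcal{K}^\infty$, the identity $\mathcal{F}(u)-\mathcal{F}(w)=\int_{\Omega_-}f(u-w)\leq 0$ together with maximality forces $u=w$ on all of $\overline{\Omega_-}=supp\,f$ for every maximizer. Your Step~2 then lives on $\Omega\setminus supp\,f$, where $f\equiv 0$, and there the worry you raise in your last paragraph evaporates: the relations \eqref{pmu} follow directly from the Lewy--Stampacchia lower bound $-\triangle_p u_p\geq f=0$ and from $u_{p_k}>\varphi$ near non-coincidence points, with no appeal to \eqref{Lambda} or \eqref{f116}. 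In other words, your route actually proves the conclusion under \eqref{f}, \eqref{triangle}, \eqref{knonv}, \eqref{f114} and ``$\Omega_-$ open'' alone; the two extra hypotheses are needed only because of the paper's PDE-based first step.
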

\begin{proof}
 
  For any functions $u, v\in \mathcal{A}_\varphi,$   we have   
  \begin{equation} \label {sp1} u=v= -d(x) \, \text{in }  supp f \setminus int(\Gamma_\infty^*)  \end{equation} 

 In fact  for any  $B(\hat{x}, \delta) \subset \Omega_-\setminus \Gamma_\infty^*$   we have $B(\hat{x}, \delta)\bigcap \Gamma_\infty^*=\emptyset$ and then $B(\hat{x}, \delta)\bigcap \Gamma_p=\emptyset$ (for large $p$) and we can  use Proposition 5.2 in \cite{BDM}. 
 
 We set $\Omega^*=\Omega\setminus \big(supp f \setminus int(\Gamma_\infty^*)\big),$ according to \cite{RTU},  for any $u\in \mathcal{A}_\varphi$  we denote by  $\Gamma_u=\{x\in \Omega^*: u(x)=\varphi(x)\}$ and we have
 \begin{equation} \label {pmu5} \begin {cases}
 u(x)\geq \varphi(x), \quad \text{in} \quad\Omega^*\\
 -\triangle_\infty u=0\quad \text{in} \quad\Omega^*\setminus \Gamma_u \quad\text{in the viscosity sense}\\
 -\triangle_\infty u\geq0 \quad\text{in}\quad \Omega^*\quad\text{in the viscosity sense}. 
  \end {cases}\end {equation}

In fact, for $u\in \mathcal{A}_\varphi,$  and $\hat{x} \in \Omega^*\setminus \Gamma_u$  we have $u(\hat{x})>\varphi (\hat{x})$ then there exists a ball   $B(\hat{x}, \delta)$  such that $u(x)>\varphi (x)$ for any $x\in  B(\hat{x}, \delta)$ and hence $u_{p_k}(x)>\varphi (x)$  for any $x\in B(\hat{x}, \delta)$   (for $k$  large)  then  $B(\hat{x}, \delta)\bigcap \Gamma_{p_k}=\emptyset.$  As a consequence  $B(\hat{x}, \delta)\bigcap \Gamma_{\infty}=\emptyset$  and  (see \eqref{Lambda})  we deduce  $f=0$  in $\Omega^*\setminus \Gamma_u.$ 
    
    Moreover  for any ball  $B(\hat{x}, \delta)\subset \Omega^*\cap  \{x\in \Omega:  f(x)<0\}$ we have  $B(\hat{x}, \delta)\subset int(\Gamma_\infty^*).$  By  \eqref{Lambda} we deduce that $B(\hat{x}, \delta)\subset \Gamma_\infty $  and then  there exists $p_0$ such that  $u_p(x)=\varphi(x)$ for any $p\geq p_0$  and then by \eqref{f116}  $ -\triangle_p u_p= -\triangle_p \varphi \geq0.$   Now we denote by 
 $$w(x)= \inf \{ v(x): v\in \mathcal{G}\}$$ where  $\mathcal{G}$ denotes the set of the continuous functions that are infinity super-harmonic in $ \Omega^*$ and satisfy the conditions
   $v(x)\geq \varphi(x), \quad \text{in} \quad\Omega^* $ and $v=u$ on  $\partial (\Omega^*).$
 By proceeding as in the proof of Theorem \ref{tf}  we conclude the proof.  \end{proof}

   \section{Examples}\label{example}

In this section, we provide examples, comments and remarks.

\textbf{ Example 1}

 Let $f=\chi_{(1,\frac32)}-\chi_{(\frac32,2)},$ \,\, $\Omega=(0,3).$
The solution to  (\ref{DP}) with  homogeneous Dirichlet conditions, is

 $$u_{p,D}=\begin{cases} c^\beta x &\qquad  0\leq x\leq 1\\
  -\frac{(-x+c+1)}{\beta+1} ^{\beta+1}+ c^\beta+\frac{c^{\beta+1}}{\beta+1} &\qquad
1<x\leq  c+1 \\
   -\frac{(x-c-1)}{\beta+1} ^{\beta+1}+ c^\beta+\frac{c^{\beta+1}}{\beta+1}  &\qquad 
c+1<x\leq \frac32 \\
  \frac{(-x-c+2)}{\beta+1} ^{\beta+1}- c^\beta-\frac{c^{\beta+1}}{\beta+1} &\qquad
\frac32<x\leq 2-c \\
 \frac{(x+c-2)}{\beta+1} ^{\beta+1}- c^\beta-\frac{c^{\beta+1}}{\beta+1}  &\qquad
2-c<x\leq 2 \\
  c^\beta(x-3) &\qquad  2<x\leq 3
 \end{cases}$$
where $$\beta=\frac1{p-1} $$
and   \begin{equation}\label{cdir} c^\beta+\frac{c^{\beta+1}}{\beta+1} =   \frac{(\frac12 -c)}{\beta+1}
^{\beta+1}.  \end{equation}
When  $p\to\infty$,  from \eqref{cdir}  we obtain  $c\to 0,$ $ c^\beta\to \frac12$  and  $u_{p,D}$ tends to 
 \begin{equation}\label{DinftyE1} u_{\infty, D}=\begin{cases} \frac x2 &\qquad  0\leq x\leq 1\\ \frac32-x  &\qquad 1<x\leq
2\\\frac x2 -\frac32 &\qquad  2<x\leq3.
 \end{cases} \end{equation}
 We now consider the obstacle  $\varphi=0.$ The solutions to the variational  inequality  (\ref{OPp}) is 
  \begin{equation}\label{DV1E1}u_p=\begin{cases} c_p^\beta x &\qquad  0\leq x\leq 1\\
  -\frac{(-x+c_p+1)}{\beta+1} ^{\beta+1}+ c_p^\beta+\frac{c_p^{\beta+1}}{\beta+1}
&\qquad 1<x\leq  c_p+1 \\
   -\frac{(x-c_p-1)}{\beta+1} ^{\beta+1}+ c_p^\beta+\frac{c_p^{\beta+1}}{\beta+1} 
&\qquad  c_p+1<x\leq \frac32 \\
  \frac{(-x-c_p+2)}{\beta+1} ^{\beta+1} &\qquad \frac32<x\leq 2-c_p \\
0  &\qquad 2-c_p\leq x\leq 3 
 \end{cases}\end{equation}
where $$\beta=\frac1{p-1} $$
and \begin{equation}\label{cp} c_p^\beta+\frac{c_p^{\beta+1}}{\beta+1} =  2
\frac{(\frac12 -c_p)}{\beta+1} ^{\beta+1}.\end{equation} 

As  $p\to\infty$, from \eqref{cp}, we obtain that  $c_p\to 0,$  $ c_p^\beta\to 1$  and the  limit of functions $u_p$  is 

\begin{equation}\label{DVinftyE1}U=\begin{cases}  x &\qquad  0\leq x\leq 1\\
 2-x&\qquad 1<x\leq 2 \\
 0  &\qquad 2\leq x\leq 3 .
 \end{cases}\end{equation}

 In this example, all assumptions of Theorem \ref{main} are satisfied and  in particular $\Gamma_p=[2-c_p,3],$\,\,\,  $c_p\to0^+$  and  $\lim \Gamma_p=[2,3]=\Gamma_\infty.$

We note that condition  \eqref{f3} does not hold then this example  shows that  condition (\ref{Lambda}) can be satisfied even if  assumption (\ref{f3}) is not satisfied.

\begin {remark}  \label {E1} From this example we deduce that a  solution to Problem \eqref{eq:3DV} cannot be
obtained by taking the supremum between the obstacle and the \textit{variational solution} limit of the $u_{p, D}.$
In fact $$\mathcal{F}(u^+_{\infty, D})=\frac18<\mathcal{F}(U)=\frac14.$$
\end{remark}

\begin {remark}   \label {E11}  We observe that  in this example Problem (\ref{eq:3DNO}) does not have a unique
solution in $\mathcal{K}_D^{\infty} $  as $$\mathcal{F}(u_{\infty,D})=\mathcal{F}(U)=\frac14.$$ Theorem \ref{TD} selects the \textit{ variational solution}, limit of the $u_{p, D}.$
  In an analogous way,   problem \eqref{eq:3DV} does not have a unique solution in
$\mathcal{K}^{\infty} $  as $$\mathcal{F}(v)=\mathcal{F}(U)=\frac14$$
where  $$v=\begin{cases}  x &\qquad  0\leq x\leq 1\\
 2-x &\qquad 1<x\leq 2 \\
 x-2 &\qquad 2<x\leq 5/2 \\
  -x+3  &\qquad \frac52<x\leq 3.  
\end{cases}$$
Theorem  \ref{main}  selects the \textit{variational solution}, limit of the functions $u_{p}.$ 
 \end{remark}

\begin{figure}
\begin{center}
\includegraphics[width=4.5cm]{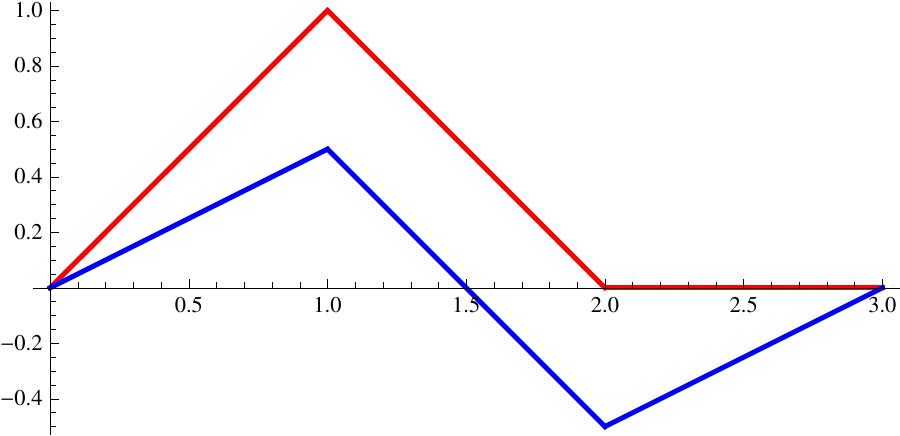}\
\end{center}
\caption{ Example 1: $U$ in red, $u_{\infty, D}$ in blue.  }
\label{Figure1}
\end{figure}

\textbf{Example 2}

 Let $f=-\chi_{(0,1)},$ $\Omega=(0,3).$
  Now we consider the obstacle  $\varphi=-\frac{1}{2}.$ The solution to the variational  inequality   \eqref{OPp} is 
 
 \begin{equation}\label{DV1E12} u_p=\begin{cases}  \frac{ ((\frac{\beta +1}{2})^{\frac1{\beta +1}}-x)^{\beta +1}- \frac{\beta +1}{2}}{\beta +1} &\qquad  0\leq x\leq ( \frac{\beta +1}{2})^{\frac{1}{\beta +1}} \\
 - \frac{1}{2} &\qquad  (\frac{\beta +1}{2})^{\frac{1}{\beta +1}}<x\leq c_p\\
 - \frac{1}{2}   + \frac{(x-c_p)^{\beta+1} }{\beta+1}    &\qquad c_p<x\leq 1\\
   (1-c_p)^  \beta (x-3)  &\qquad 1<x\leq 3 \end{cases}\end{equation}
where $$\beta=\frac1{p-1} $$
and \begin{equation}\label{cp2} \frac12 =  
2(1 -c_p)^\beta+ \frac{(1 -c_p)}{\beta+1} ^{\beta+1}.\end{equation} 

As  $p\to\infty$, from   \eqref{cp2}, we obtain that  $c_p\to 1^-,$ $ (1-c_p)^\beta\to \frac14$  and the  limit of functions $u_p$  is 
 \begin{equation}\label{cp22} U=\begin{cases}  -x  &\qquad  0\leq x\leq \frac12\\
 - \frac12 &\qquad \frac12<x\leq 1 \\
\frac14(x-3)  &\qquad 1< x\leq 3. 
 \end{cases}\end{equation} 

While the limit of the solutions to Problems (\ref{DP}) with  homogeneous Dirichlet conditions is

 \begin{equation}\label{DinftyE2} u_{\infty, D}=\begin{cases} -x \qquad  0\leq x\leq 1\\ 
 \frac{x-3}{2}  \qquad 1<x\leq 3.
\end{cases} \end{equation}

We note that, in this example, all assumptions of Theorems \ref{main}  and \ref{tf5}  are satisfied, in particular $\Gamma_p=[(\frac{\beta +1}{2})^{\frac{1}{\beta +1}},c_p],$\,\,\,  $c_p\to1^-$  and  $\lim \Gamma_p=[\frac12,1]=\Gamma_\infty.$
  
\begin {remark}  \label {E2} We note a peculiarity of the limit of solutions to obstacle problems (\ref{OPp}).

 In Example 1  the functions $u_p$ in (\ref{DV1E1}) converge to $U$ in (\ref{DVinftyE1})  while  the functions $-\bigtriangleup_pu_p$  converge to $f^*=\chi_{(1,\frac32)}-\chi_{(\frac32,2)}.$  Hence the limit  $U^*$ of the solutions  $u_p^*$ of the Dirichlet problem  (\ref{DP})  with datum  $f^*$ and homogeneous Dirichlet conditions,  coincides with the  function  $u_{\infty,D}$ in  (\ref{DinftyE1})  that  does not  belong to the convex $\mathcal{K}^\infty.$ 
 
 In Example 2  the functions $u_p$ in (\ref{DV1E12}) converge to $U$ in (\ref{cp22})  while  the functions $-\bigtriangleup_pu_p$  converge to $f^*=-\chi_{(0,\frac12)}.$  Hence the limit  $U^*$ of the solutions  $u_p^*$ of the Dirichlet problem  (\ref{DP})  with datum  $f^*$ and homogeneous Dirichlet conditions, is 
 
 $$U^*=\begin{cases}  -x  &\qquad  0<x\leq \frac12\\
 - \frac12 +\frac15(x-\frac12) &\qquad \frac12<x\leq 3 
 \end{cases}$$
that   belongs to the convex $\mathcal{K}^\infty,$  but it is not  a mazimizer of  (\ref{eq:3DV}).
\end{remark}

\begin{figure}
\begin{center}
\includegraphics[width=4.5cm]{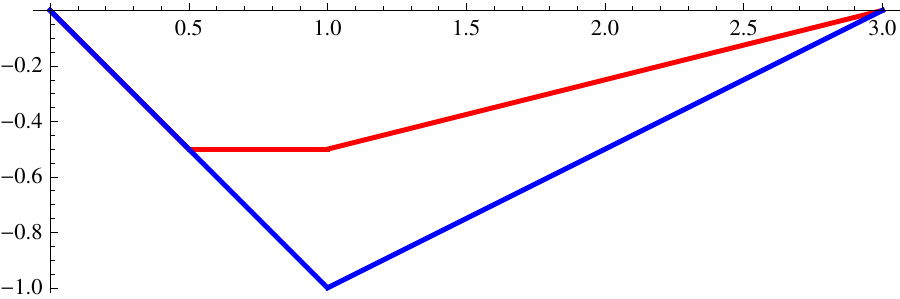}\
\end{center}
\caption{ Example 2: $U$ in red, $u_{\infty, D}$ in blue.   }
\label{Figure2}
\end{figure}

\textbf{Example 3} 

Let $f=\chi_{(0,1)}-\chi_{(2,3)},$ \,\, $\Omega=(0,3).$

The limit of the solutions to  (\ref{DP}) with  homogeneous Dirichlet conditions, is 

 \begin{equation}\label{DinftyE3} u_{\infty, D}=\begin{cases} x &\qquad  0\leq x\leq \frac34\\
  \frac64-x  &\qquad \frac34<x\leq\frac94\\
x-3 &\qquad  \frac94<x\leq3.
 \end{cases} \end{equation}

  Now we consider the obstacle  $\varphi=0.$ The solutions to the variational  inequality  (\ref{OPp}) is 
 
  \begin{equation}\label{DV1E3}u_p=\begin{cases} \frac{c_p^{\beta+1} - (c_p- x)^{\beta+1} }{{\beta+1} } &\qquad  0\leq x\leq c_p\\
  \frac{c_p^{\beta+1} - ( x -c_p)^{\beta+1} }{{\beta+1} }
&\qquad c_p<x\leq 1 \\
 (1-x)(1-c_p)^\beta + \frac{c_p^{\beta+1} - (1-c_p)^{\beta+1} }{{\beta+1} }
&\qquad  1<x\leq 2 \\
  \frac{(3-x-c_p)}{\beta+1} ^{\beta+1} &\qquad 2<x\leq 3-c_p \\
0 & \qquad 3-c_p< x\leq 3
 \end{cases}\end{equation}
where $$\beta=\frac1{p-1} $$
and \begin{equation}\label{cp3} -(1-c_p)^\beta+\frac{c_p^{\beta+1}}{\beta+1} =  2
\frac{(1 -c_p)}{\beta+1} ^{\beta+1}.\end{equation} 

As  as $p\to \infty,$
from \eqref{cp3}, we obtain that  $c_p\to 1,$ $ (1-c_p)^\beta\to 1$
and the  limit of functions $u_p$  is 

\begin{equation}\label{DVinftyE3}U=\begin{cases}  x &\qquad  0\leq x\leq 1\\
 2-x&\qquad 1<x\leq 2 \\
 0  &< 2\leq x\leq 3 .
 \end{cases}\end{equation}

 The solution (\ref{DVinftyE3}) of Problem  (\ref{eq:3DV})  differs from the solution (\ref{DinftyE3}) of  problem  (\ref{eq:3DNO})  with homogenous Dirichlet data, moreover  $U \neq  u_{\infty, D}\vee 0.$

\begin {remark} \label{E3}  In Example 3, the datum  $f$   changes sign in $\Omega$  and it  is equal to $0$ in a set of positive measure.
All assumptions of Theorem \ref{main} are satisfied, in particular $\Gamma_p=[3-c_p,3],$\,\,\,  $c_p\to1^-$  and  $\lim \Gamma_p=[2,3]=\Gamma_\infty.$
 We note that also assumptions   (\ref{f2}) and  (\ref{f3})  are satisfied. 
 As we cannot use comparison principles (see \cite{LW}), then  we do not know whether the viscosity solution to problem (\ref{eq:3DV}) is unique: in any case  Theorems \ref{main} and \ref{tf}   select the variational solution $U,$ limit of the  functions $u_p.$   
 \end {remark}

\begin{figure}
\begin{center}
\includegraphics[width=4.5cm]{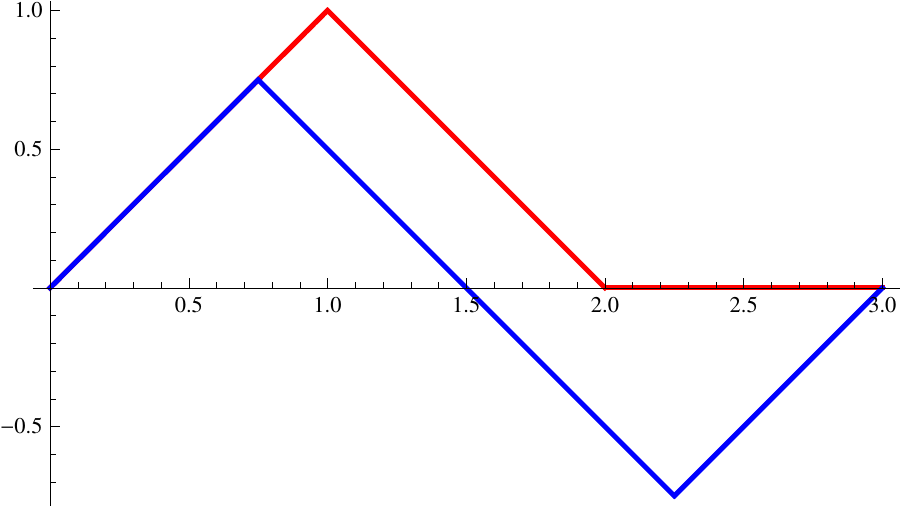}
\end{center}
\caption{ Example 3: $U$ in red, $u_{\infty, D}$ in blue.  }
\label{Figure3}
\end{figure}

  \bigskip
  
\textbf{Example 4}

Let $f= \chi_{(-2,-\frac32)\bigcup(\frac32,2) },$ $\Omega=(-2,2).$

  Now we consider the obstacle  $\varphi=1-x^2 .$ The solutions to the variational  inequality \eqref{OPp} is

 \begin{equation}\label{DV1E 4} u_p=
 \begin{cases} 
 1- x^2 &\qquad  |x|\leq c_p\\
-2c_p |x| +1 +c_p^2  &\qquad   c_p<|x| \leq  \frac{3}{2}\\
  \frac{-( |x| - \frac32  + (2c_p)^{p-1} )^{\beta +1}  + ( 2- \frac32  + (2c_p)^{p-1} )^{\beta +1}} {\beta +1}
 &\qquad   \frac{3}{2}<|x| \leq 2
   \end{cases}
   \end{equation}

where $$\beta=\frac1{p-1} $$
and \begin{equation}\label{cp5} c^2_p+1-3c_p =
 \frac{ (\frac12  +(2c_p)^{p-1})^{\beta+1}- ( 2c_p)^p }{\beta +1} .
\end{equation}

As $p\to \infty,$
from \eqref{cp5}, we obtain that  $c_p\to \frac{3-\sqrt{7}}{2}$   and the  limit of functions $u_p$  is

 \begin{equation}\label{DVinftyE4} U=\begin{cases}1-x^2  &\qquad  |x|\leq \frac{3-\sqrt{7}}{2}\\
 -(3-\sqrt{7})|x|+1+\frac{8-3\sqrt{7}}{2} &\qquad \frac{3-\sqrt{7}}{2}< |x|\leq \frac32\\
2- |x| &\qquad  \frac32<|x|\leq 2. 
 \end{cases}\end{equation} 

The solution  $u_{\infty, D}$  of  Problem  (\ref{eq:3DNO})  with homogenous Dirichlet data is 

\begin{equation}\label{DVinftyE40} u_{\infty, D}=\begin{cases} 2-|x|  &\qquad  \frac32<|x| \leq 2\\
  \frac12    &\qquad  |x|\leq \frac32.  \end{cases}\end{equation}

\begin {remark}\label{E4}
In  this example  all assumptions of Theorem  \ref{tf3} are satisfied  and the function $U$ in (\ref{DVinftyE4})  is a solution to Problem  (\ref{eq:3DV}) and  differs from the function $u_{\infty, D}$  in  (\ref{DVinftyE40}) that is a solution to  Problem  (\ref{eq:3DNO})  (with homogenous Dirichlet conditions), moreover  $U \neq  u_{\infty, D}\vee \varphi.$
Hence Example 4  shows that assumptions  of Theorem \ref{tf3} do not imply that the limit of the  solutions to  Problems (\ref{Dpn}) solves also Problem (\ref{eq:3DV}): in particular,  Theorem  \ref{tf3} is not a easy consequence  of  Theorem 2.4  in \cite{HL}.

\end {remark}

\begin{figure}
\begin{center}
\includegraphics[width=4.5cm]{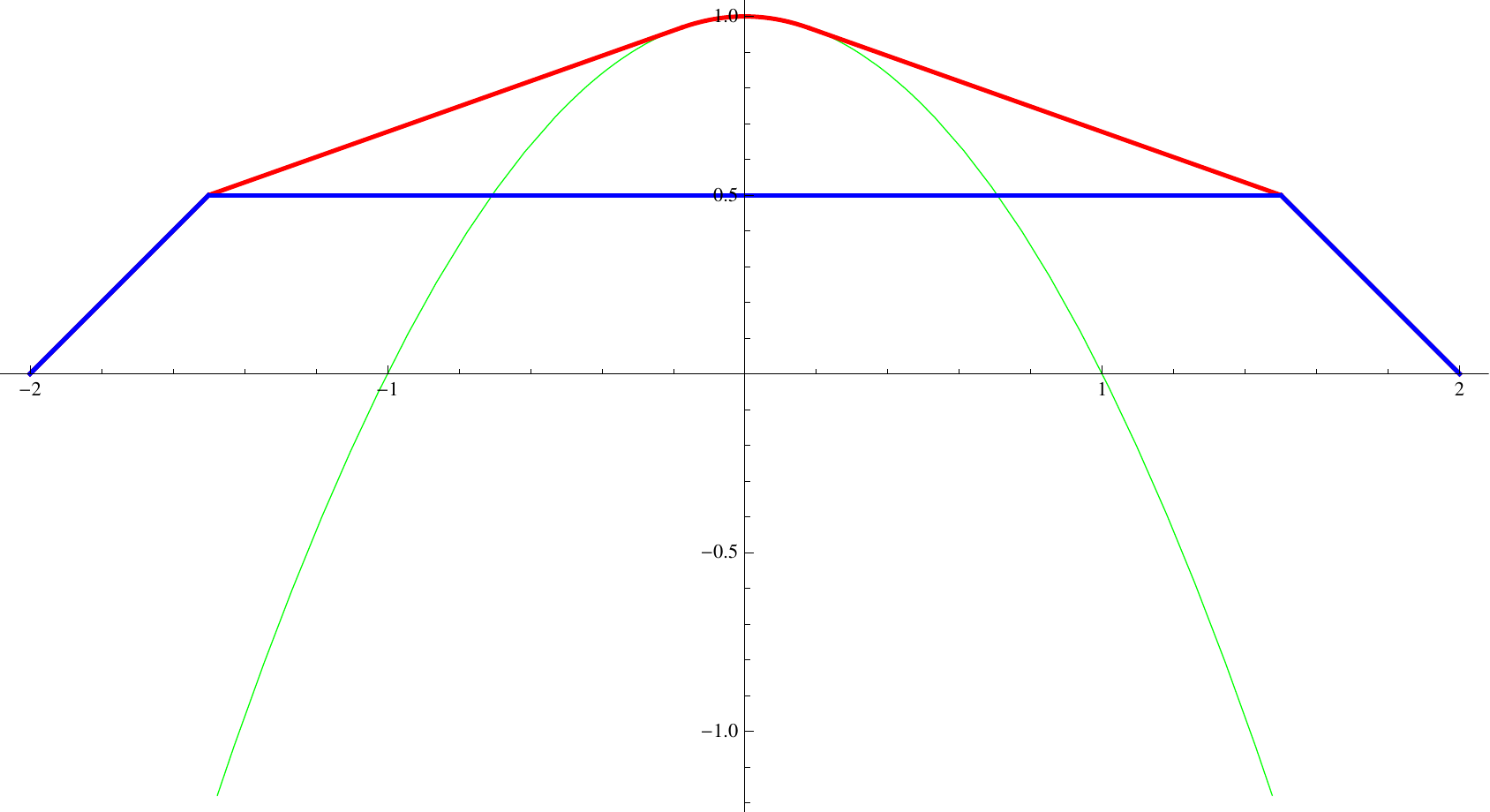}\
\end{center}
\caption{ Example 4: $U$ in red, $u_{\infty, D}$ in blue,  obstacle in green. }
\label{Figure4}
\end{figure}

\textbf{{Example 5}}

 Let $f=-\chi_{(-\frac13,\frac13)},$ $\Omega=(-1,1).$
  Now we consider the obstacle  $\varphi=\frac{3}{4}(x^2-1).$ The solution to the variational  inequality is

 \begin{equation}\label{DV1E 5} u_p=\begin{cases}  \frac{3}{4}(x^2-1) & \qquad  |x|\leq c_p\\
   \frac{ (|x|+ (\frac32 c_p)^{p-1}- c_p)^{\beta +1}-(\frac13+ (\frac32 c_p)^{p-1}- c_p)^{\beta}   (1+ (\frac32 c_p)^{p-1}- c_p+ \frac{2\beta} 3) }{\beta +1}   &\qquad  c_p<|x| \leq  \frac{1}{3} \\
(\frac13+ (\frac32 c_p)^{p-1}- c_p)^{\beta}(|x| -1)  &\qquad \frac13<|x|\leq1
  \ \end{cases}\end{equation}
where $$\beta=\frac1{p-1} $$
and \begin{equation}\label{cp6} \frac34 (c^2_p-1)=  \frac{ (\frac32 c_p)^{\frac{\beta +1}\beta}-(\frac13+ (\frac32 c_p)^{p-1}- c_p)^{\beta}   (1+ (\frac32 c_p)^{p-1}- c_p+ \frac{2\beta} 3) }{\beta +1} .\end{equation}

 As $p\to \infty,$
from \eqref{cp6}, we obtain that  $c_p\to \frac13,$    $(\frac13+ (\frac32 c_p)^{p-1}- c_p)^{\beta}\to 1,$    $\Gamma_p=[-c_p, c_p],$   $\lim \Gamma_p=[-\frac13, \frac13]$    and the  limit of functions $u_p$  is 
 \begin{equation}\label{DVinftyE5} U=\begin{cases} \frac34(x^2-1) &\qquad  |x|\leq \frac13\\
 |x|-1 &\qquad \frac13<|x|\leq 1. 
 \end{cases}\end{equation}

In this example, 
 the solution  $U$ in (\ref{DVinftyE5}) of Problem  (\ref{eq:3DV}) differs from the function  $u_{\infty, D}= -d(x)$  solution to  Problem  (\ref{eq:3DNO})  with homogenous  Dirichlet data.

 \begin {remark} \label{E5}  In Example 5  the assumptions  of Theorem \ref{main}  are satisfied,  nevertheless   the limit of the solutions to   Problems (\ref{Dpn})  does not solve Problem (\ref{eq:3DV}). In particular  Theorem \ref{main} is not a easy consequence  of  Theorem 2.1  in \cite {HL}.
\end {remark}

\begin{figure}
\begin{center}
\includegraphics[width=4.5cm]{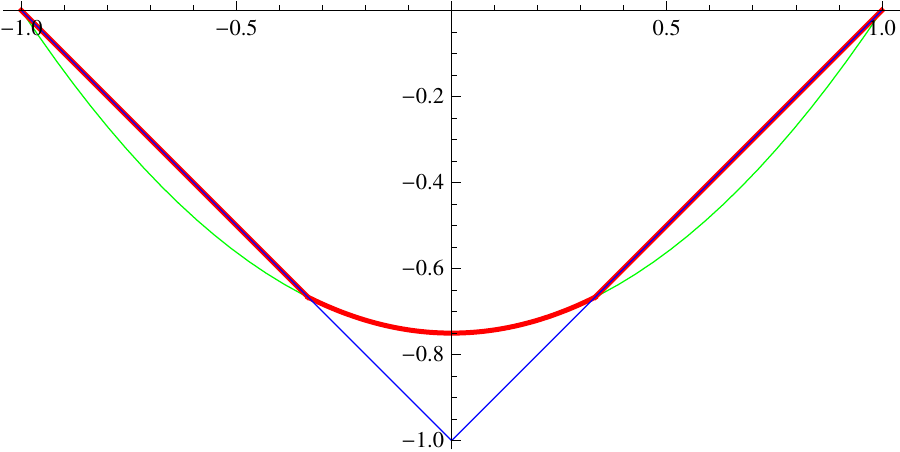}\
\end{center}
\caption{ Example 5: $U$ in red, $u_{\infty, D}$ in blue, obstacle in green.  }
\label{Figure5}
\end{figure}

\textbf{{Example 6}}

Let  $\Omega=\{x\in \mathbb{R}^n: |x|<2\},\,\,$  $\varphi= 1-x^2$  and $f=0$ (see example in the Appendix of \cite{RTU}).

Problem (\ref{eq:3DV}) does not have a unique solution; in fact both the  following functions  $U$ and $v^*$ are solutions: 
\begin{equation}\label{uRTU}
 U=\begin{cases}  
1-x^2&\qquad |x|\leq h \\
-2h|x|+4h &\qquad   h< |x|\leq 2
 \end{cases} 
\end{equation}
where $h=2-\sqrt{3}$
and
\begin{equation}\label{vRTU}
 v^*=\begin{cases}  
1-x^2&\qquad |x|\leq \frac12 \\
-|x|+\frac54&\qquad
\frac12  \leq|x|\leq\frac54\\
0   &\quad\quad         \frac54\   <|x|\leq2.
\end{cases} 
\end{equation}

For $p>n$ and $\alpha=\frac{n-1}{p-1}$, we have that  the solution to \eqref{OPp} is
\begin{equation}\label{upRTU}
u_p=\begin{cases}  
1-x^2\qquad |x|\leq c_p \\
\frac{-2 c_p^{1+\alpha}}{1-\alpha} (  |x|^{1-\alpha}-2^{1-\alpha})\qquad   c_p< |x|\leq 2
\end{cases} 
\end{equation}
where 

\begin{equation}\label{cp7} (1+\alpha) c_p^2-2^{2-\alpha}  c_p^{1+\alpha}+ 1-\alpha =0. \end{equation} 

 As $p\to\infty,$ from \eqref{cp7}, we obtain that  $c_p\to h$ and    $\lim\Gamma_p=\lim [-c_p,c_p]=[-h, h]=\Gamma_\infty.$
  The function $U,$ that  (according Remark \ref{cerchio} of  Theorem \ref{radial}) is  limit of $u_p,$  coincides on the annulus $B_{h,2}$  with  the AMLE of $g$ ,
\begin{equation}\label{g}
g=\begin{cases}  
1-h^2&\qquad x\in\partial B_h \\
0 &\qquad  x\in\partial B_2
\end{cases} 
\end{equation}
 while the function $v^*$ is  a  solution  of Problem (\ref{eq:3DV}), but it is not  the AMLE of  g.

\begin{figure}
\begin{center}
\includegraphics[width=4.5cm]{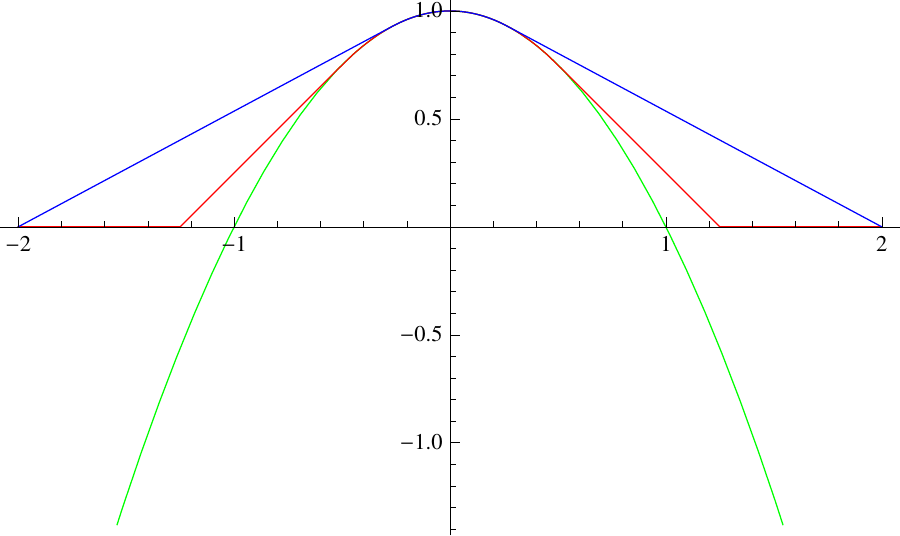}\
\end{center}
\caption{ Example 6: $U$ in red, $v^*$ in blue, obstacle in green.  }
\label{Figure6}
\end{figure}

\textbf{{Example 7}}

Let  $\Omega=\{x\in \mathbb{R}^n: |x|<2\},\,\,$  $\varphi= 1-x^2$  and  $f=-1.$

The  function $U,$ limit of $u_p$  coincides  with the unique viscosity solution to Problem (\ref{eq:3DV}), $u_\infty$  (see Theorem \ref{tf2}). More precisely

\begin{equation}\label{u_pRTU}
u_p=\begin{cases} 
  1-x^2 &\qquad |x|\leq h_p \\
 \frac{(-|x|+c_p)}{\beta+1} ^{\beta+1}- \frac{(-c_p+2)}{\beta+1} ^{\beta+1} 
&\qquad h_p<|x|\leq  c_p \\
 \frac{(|x|-c_p)}{\beta+1} ^{\beta+1}- \frac{(-c_p+2)}{\beta+1} ^{\beta+1}  &\qquad c_p \leq |x|\leq  2 \\ \\
 \end{cases}
\end{equation}

where $$\beta=\frac1{p-1}, \,\,\,(-h_p+ c_p)^\beta=2h_p,\,\,(-h_p+ c_p)^{\beta+1} - (2- c_p)^{\beta+1}=(\beta+1)(1-h^2_p).$$
Then $h_p\to \frac{1}{2},\,\,\,c_p \to \frac{13}{8}$ and the limit function is 

\begin{equation}\label{URTU}
U=\begin{cases} 1-x^2 \qquad  |x|\leq \frac12 \\
\frac{5}{4}   -|x|  \qquad  \frac12<|x|\leq  \frac{13}{8}\\
|x|-2  \qquad \frac{13}{8}<|x|\leq 2,
\end{cases} 
\end{equation}
while the function $u_{\infty, D}$ coincides with the opposite of the distance from the boundary, $u_{\infty, D}=-d(x)= 2-|x|.$ 

\bigskip

\textbf{GRANTS } The authors are members of GNAMPA (INdAM) and are partially supported by Grant
Ateneo \lq\lq Sapienza" 2017. 
\bigskip

\end{document}